\def\NAT@def@citea{\def\@citea{\NAT@separator}}
\theoremstyle{plain}
\newtheorem{theorem}{Theorem}[section]
\newtheorem{lemma}[theorem]{Lemma}
\newtheorem{proposition}[theorem]{Proposition}
\theoremstyle{definition}
\newtheorem{example}[theorem]{Example}
\theoremstyle{remark}
\newtheorem{remark}[theorem]{Remark}
\newcommand{\bx}{\bar x}
\newcommand {\R} {\mathbb R}
\newcommand {\dom} {{\rm dom}\,}
\newcommand {\sd} {\partial}
\newcommand{\vertiii}[1]{\left\vert\kern-0.25ex\left\vert\kern-0.25ex\left\vert #1\right\vert\kern-0.25ex\right\vert\kern-0.25ex\right\vert}
\newcommand{\vertiiiBig}[1]{\Big\vert\kern-0.25ex\Big\vert\kern-0.25ex\Big\vert #1\Big\vert\kern-0.25ex\Big\vert\kern-0.25ex\Big\vert}
\newcommand{\norm}[1]{\left\Vert#1\right\Vert}
\newcommand{\red}[1]{\textcolor{red}{#1}}
\newcommand{\ang}[1]{\left\langle #1 \right\rangle}
\newcounter{mycount}
\begin{document}

\title{
Dual characterizations of norm  minimization problems}

\author{
\name{Nguyen Duy Cuong
}
\thanks{CONTACT Nguyen Duy Cuong. Email: ndcuong@ctu.edu.vn}
\thanks{{Dedicated to Professor Phan Quoc Khanh on the occasion of his 80th birthday}}
\affil{
Department of Mathematics, College of Natural Sciences, Can Tho University, Can Tho {City}, Vietnam
}
}

\maketitle

\begin{abstract}
The paper studies a general norm minimization problem on a product of normed vector spaces.
We establish dual necessary and sufficient optimality conditions and derive explicit formulas for the corresponding solution sets.
These formulas are obtained under the assumption that one optimal solution together with its associated dual vectors arising from the optimality conditions is known.
Three important cases of product norms, namely the sum norm, maximum norm and  $p$-norm, are also studied.
Several examples in  finite and infinite dimensional spaces equipped with various types of norms are presented to illustrate the established results.
\end{abstract}

\begin{keywords}
Fermat-Torricelli problem; Chebyshev centre problem;  optimality conditions; convex analysis; {functional analysis}
\end{keywords}

\begin{amscode}
49J52; 49J53; 49K40; 90C30; 90C46
\end{amscode}

\setcounter{tocdepth}{2}

\section{Introduction}\label{S1}
Given a finite set of points in a normed vector space $X$, the  Fermat–Torricelli problem seeks a point minimizing the sum of the distances to these points, while the Chebyshev centre problem aims to find a point minimizing the maximum of the distances. 
Both problems have been intensively studied and have diverse applications in many areas such as location theory, data analysis,  approximation theory and related fields \cite{MarSwaWei02,ReiTru23,BolMarSol99, DurMic85,AliTsa19,Ves97,MorNam11,KazLiuMor25}.

There are several important points around the Fermat--Torricelli  and Chebyshev centre problems.
Computational methods have been studied in finite dimensional spaces equipped with the Euclidean norm \cite{MorNam19,MorNam11} and, more generally, in Hilbert spaces \cite{BalMax22,ReiTru23}.
In infinite-dimensional settings, much attention has been devoted to properties of the solution set and the associated solution mapping such as  existence, uniqueness, compactness, continuity and stability \cite{Ves97,AmiMac84,AliTsa19}.
Some extensions of the classical models have also been investigated, involving, e.g.,  the replacing the given points by  sets \cite{AliTsa19,MorNamSal12,MorNam11} or  weighted formulations of the problems \cite{ZacZou08,Pau25,BanDut03,Ale14}.
Much work targets  dual optimality conditions, primarily for the Fermat--Torricelli problem and its extensions \cite{DurMic85,MorNam11,MorNamSal12,MarSwaWei02}.
Finite-dimensional models with non-Euclidean norms \cite{MarSwaWei02} or more general gauge-type functions \cite{DurMic85} have also been investigated.

The aforementioned problems are typically studied separately in the literature.
The primary reason for this lies in their seemingly different formulations:
the Fermat–Torricelli problem uses the sum norm on $X^n$, whereas Chebyshev centre problem uses the maximum norm.
From the theoretical point of view, however, this distinction does not seem essential and it  could make sense investigating a general norm minimization problem with an arbitrary norm on $X^n$.

Some authors have studied unified modeling frameworks encompassing the Fermat--Torricelli and  Chebyshev centre problems.
For instance, in \cite{Ves97,BanDut03}, among other things,  structural properties of the corresponding solution sets and maps are studied, whereas in \cite{Pau25} the authors establish connections between the Fermat--Torricelli and  Chebyshev centre problems via the concept of Birkhoff--James orthogonality.
To the best of our knowledge, however, there is  no general framework that systematically focuses on dual optimality conditions and explicit solution constructions in which the Fermat--Torricelli and Chebyshev centre problems arise as particular cases.

We note that there are two distinct problems commonly referred to as Chebyshev centre problems.
The first one, discussed above, concerns a finite collection of points and seeks a centre that minimizes the maximum of the distances
 to these points.
 {This problem is also known in the literature as the Sylvester problem \cite{MorNam14,NamHoa13,NamHoaAn14}.}
The second one is the problem of finding a largest inscribed ball in a given set \cite{BotTur94,BaeCas23}.
While the former can be viewed as a min–max covering problem with a free centre, the latter is a max–min packing problem in which the centre is constrained to lie in the set.
Although these two problems are closely related in certain geometric settings, they are not equivalent in general.
In this paper, we focus exclusively on the first problem as a special case of our general framework; the second problem lies beyond the scope of the present work.

We have encountered the general norm minimization problem in our recent study of generalized separation results for a finite collections of  sets \cite{CuoKru25}.
Such results play an important role in establishing optimality conditions \cite{CuoKru} and calculus rules for normal cones, subdifferentials, and coderivatives \cite{Mor06.1}.
In order to establish  dual separation results, it is necessary to impose certain inequalities involving norms on a product of vector spaces.
Although such estimates arise naturally and fit well within our framework, the sharpest bounds have not been obtained since the minimal values of the product norms involved are  unknown.
Sharper estimates allow for weaker assumptions and, consequently, strengthen the resulting statements.

In this paper, we develop a unified framework in which dual optimality conditions and solution set properties and constructions of norm minimization problems such as the Fermat--Torricelli and  Chebyshev centre problems  can be derived in a unified manner.
The analysis carried out in this paper, among other contributions, provides a deeper understanding of norm minimization problems and the corresponding generalized separation results.

We would like to emphasize that two norms are essential for our model.
The first is the norm on the underlying space $X$, while the second is the norm on  the product space $X^n$.
The norm on $X$ determines the level of analytical and computational complexity of the problem.
For instance, when $X$ is a finite dimensional space equipped with the Euclidean norm or a Hilbert space, numerical solution methods can be effectively employed due to the smoothness of the  norm away from the origin.
In contrast, when nonsmooth norms such as the maximum norm are involved, standard gradient-based numerical algorithms are no longer applicable.
On the other hand,  the norm on $X^n$ governs how the individual norms on $X$ interact:
the sum norm gives rise to the Fermat--Torricelli problem; the maximum norm leads to the Chebyshev centre problem;  the $p$-norm generalizes the $p$-Fermat--Torricelli problem.

Although the sum norm and maximum norms can be viewed as particular cases of $p$-norms corresponding to $p=1$ and $p=\infty$, respectively, the analytical techniques required to handle these norms differ substantially.
More precisely, for the sum norm, the associated optimality conditions can be derived by applying the standard convex subdifferential sum rule. In contrast, for the maximum norm, this sum rule is no longer applicable in general; instead, one must employ appropriate subdifferential chain rules for max-type functions.
Finally, for $p$-norms with $1<p<\infty$, the norm is smooth away from the origin whenever the  norm on $X$ is smooth, and consequently,  classical differential calculus can be applied to derive optimality conditions.

In the present paper, we investigate a general norm minimization problem and its particular cases by employing standard tools from convex  and functional analysis.
We first study a general model, focusing on fundamental properties of the solution set such as existence, uniqueness and compactness, and establish dual necessary and sufficient optimality conditions.
We then examine three important norms on the product space $X^n$, namely the sum, maximum and  $p$-norms, and provide comparisons with existing results in the literature whenever applicable.
Several examples are presented to illustrate the established results in  finite and infinite dimensional spaces.
In particular, when $X := \mathbb{R}^n$, we consider the cases of the sum, maximum and $p$-norms on $X$.
These examples illustrate how the choice of the norm on $X$ affects the structure of the solution sets.

{
In some situations, identifying the entire solution set of an optimization problem is very useful. 
For example, in a facility location problem minimizing total distance to a set of points, practical constraints such as location availability or regulations may prevent using certain points.
 Knowing all optimal points allows selecting a feasible location without increasing the minimal cost. 
The study of characterizations of solution sets of convex programs can be found in \cite{Man88,Bur91}.
}
Of course, finding all solutions of a general norm minimization problem is a challenging task.
Nevertheless, under some reasonable assumptions, such a task can be carried out theoretically.
In this paper, following the ideas developed in \cite{MarSwaWei02,DurMic85}, we show that explicit descriptions of the entire solution set can be obtained once some optimal solution is known together with the associated dual vectors arising from the  dual optimality conditions.
Several examples are provided to illustrate the construction of solution sets.

One of the main tools in our study is a product norm construction scheme studied in \cite{SaiKatTak00} and recently refined in \cite{Cuo25}.
Starting from a continuous convex function satisfying some reasonable conditions, we obtain a broad class of norms on product spaces including the sum, maximum and  $p$-norms.
This class of norms provides a convenient setting for our analysis since explicit formulas for the corresponding dual norm can be derived.
More precisely, when a primal norm is generated by a continuous convex function, its dual norm is determined by a conjugate-type convex function associated with the primal one.
The duality relation between these generating functions then translates directly into precise relationships between the primal and dual norms.

The next Section~\ref{S1+} collects general definitions and preliminary results that are used throughout the paper.
Section~\ref{S2} is devoted to the study of a class of general norms on  a product of normed spaces constructed via a family of convex functions defined on the standard simplex of $\mathbb{R}^n$ and establishes technical results needed for the subsequent analysis.
Section~\ref{S3} studies a general norm minimization problem with particular emphasis on dual necessary and sufficient optimality conditions.
Fundamental properties of the solution set such as existence, uniqueness and compactness are also investigated.
Sections~\ref{S4}--\ref{S6} examine, respectively, the three particular cases of the general model corresponding to the sum, maximum and $p$-norms on $X^n$.
For each case, several examples in  finite and infinite dimensional spaces equipped with various types of norms are provided.

\section{Preliminaries}\label{S1+}
Our basic notation is standard, see, e.g., \cite{Mor06.1,RocWet98} .
Throughout the paper, if not explicitly stated otherwise, $X$ is a vector space equipped with a norm $\|\cdot\|$.
{We denote by $0_X$ the zero vector in $X$.}
Given an integer $n\ge 2$, the product space $X^n$ is assumed to be endowed with a norm $\vertiii{\cdot}$.
The topological dual of a normed space $X$ is denoted by $X^*$, while $\langle\cdot,\cdot\rangle$ denotes the bilinear form defining the pairing between the two spaces.
The dual norms of $\|\cdot\|$ and $\vertiii{\cdot}$ are denoted by $\|\cdot\|^*$ and $\vertiii{\cdot}^*$, {respectively}.
Both the primal and dual norms are usually labelled by subscripts indicating the underlying space.
When $X$ is a Hilbert space 
with an inner product $\langle\cdot,\cdot\rangle$, the associated norm is given by  
$\|\cdot\|:=\sqrt{\ang{\cdot,\cdot}}$.
Symbols $\R$ and $\R_+$ denote the real line equipped with the standard norm and the set of all nonnegative real numbers, respectively.
We write $\infty$ instead of $+\infty$ and employ the convention that $\frac{1}{\infty}=0$ and $\frac{1}{0}=\infty$.
Throughout the paper, we assume that $p,q\in[1,\infty]$ satisfy the condition
$\frac{1}{p}+\frac{1}{q}=1$.
The $p$-norm on 
$X^n$ is defined by  
\begin{equation}\label{pnorm}
\vertiii{x}_{p}
:= \begin{cases}
\left(\|x_1\|^p+\ldots\|x_n\|^p\right)^{\frac{1}{p}}  & \text{if } p\in[1,\infty),\\	
\max\{\|x_1\|,\ldots,\|x_n\|\} & \text{if } p=\infty
\end{cases} 
\end{equation}
for all $x:=(x_1,\ldots,x_n)\in X^n$.
It is well known that
\begin{equation}\label{dnorm}
\vertiii{x^*}^*_{p}=\vertiii{x^*}_q
= \begin{cases}
\max\{\|x^*_1\|^*,\ldots,\|x^*_n\|^*\} & \text{if } p=1,\\
\left(\|x^*_1\|^{*q}+\ldots\|x^*_n\|^{*q}\right)^{\frac{1}{q}}  & \text{if } p\in(1,\infty]
\end{cases} 
\end{equation}
for all $x^*:=(x^*_1,\ldots,x^*_n)\in (X^*)^n$, where $\vertiii{\cdot}^{*}_{p}$ denotes the dual norm of  $\vertiii{\cdot}_{p}$, and $\vertiii{\cdot}_{q}$ is the $q$-norm on the product space $(X^*)^n$.

For a convex function $f:X\to\R\cup\{\infty\}$ on a normed space $X$, its domain is defined  by
$\dom f:=\{x \in X\mid f(x) <\infty\}$.
The convex subdifferential of $f$ at $\bar x\in\dom f$
is defined by
\begin{gather*}
\sd f(\bar x):= \left\{x^* \in X^*\mid \ang{x^*,x-\bx}\le f(x)-f(\bx)\;\;\text{for all}\;\;x\in X\right\}.
\end{gather*}
We set $\partial{f}(\bx):=\emptyset$ if $\bx\notin\dom f$.
For a norm $\|\cdot\|$ on a vector space $X$,
it is well known \cite[Example~3.36]{MorNam22} that
\begin{equation}\label{sn}
\partial\|\cdot\|(x) = 
\begin{cases}
\left\{x^*\in X^*\mid \|x^*\|^* =1\;\;\text{and}\;\;\langle x^*,x\rangle=\|x\|\right\} & \text{if } x \ne 0,\\
\{x^*\in X^*\mid \|x^*\|^* \le 1\} & \text{otherwise.}
\end{cases}
\end{equation}
A function $f$ is  {strictly convex} if $f\left(\frac{x+y}{2}\right)<\frac{f(x)+f(y)}{2}$
for any distinct $x,y\in\dom f$.
In particular, a norm $\|\cdot\|$ is  {strictly convex }if $\norm{{x+y}}<\|x\|+\|y\|$ for any distinct  $x,y\in X$.
The sign function is defined by
$\text{sgn}(x):=1$ if $x>0$, $\text{sgn}(x):=0$ if $x=0$, and
$\text{sgn}(x):=-1$ if $x<0$.
The convex hull of vectors $v_1,\ldots,v_n\in X$ is denoted by $\text{conv}\{v_1,\ldots,v_n\}$.
\if{
Let $X:=\R^n$ and an integer $k\in[1,n]$.
Define
\begin{gather}\label{k-norm}
\|x\|_{(k)}:=\max_{1\le i_1<\ldots< i_n\le n}\{|x_{i_1}|+\ldots+|x_{i_k}|\}
\end{gather}	
for all $x:=(x_1,\ldots,x_n)\in\R^n$.
Then $\|\cdot\|_{(k)}$ is a norm on $\R^n$.
It is called a polyhedral norm.
It is clear that
\begin{gather}\label{1.1}
\|\cdot\|_\infty=\|\cdot\|_{(1)}\le \ldots\le\|\cdot\|_{(k)}\le\ldots\le \|\cdot\|_{(n)}=\|\cdot\|_1.	
\end{gather}	
The dual norm of \eqref{k-norm} is given by
\begin{gather}\label{dk-norm}
\|\cdot\|^*_{(k)}:=\max\left\{\dfrac{\|\cdot\|_1}{k},\|\cdot\|_\infty\right\}.
\end{gather}	
It follows from \eqref{1.1} that
\begin{gather*}
\|\cdot\|_\infty=\|\cdot\|^*_1=\|\cdot\|^*_{(n)}\le
\ldots\le\|\cdot\|^*_{(k)}\le\ldots\le\|\cdot\|^*_{(1)}=\|\cdot\|^*_\infty=\|\cdot\|_1.
\end{gather*}	
}\fi

\section{Product Norm Constructions}\label{S2}
The following general norm construction has been developed in \cite{SaiKatTak00} and recently refined in  \cite{Cuo25}.
Consider a convex and compact set $\Omega_n:=\{(t_1,\ldots,t_{n})\in{\R^{n}_+}\mid \sum_{i=1}^nt_i =1\}.$
It is clear that $\max\{t_1,\ldots,t_n\}\ge \frac{1}{n}$ for all $(t_1,\ldots,t_{n})\in\Omega_n$.
Let $\pmb{\Psi}_n$ denote the class of all convex continuous  functions $\psi:\Omega_n\to\R$ such that
$\psi(\mathbf{e}_1)=\cdots=\psi(\mathbf{e}_n)=1$, where $\mathbf{e}_1,\ldots,\mathbf{e}_n$ are the standard basis vectors of $\R^n$, and
\begin{gather}\label{psi}
\psi(t_1,\ldots,t_n)\ge (1-t_i)\cdot\psi\left(\dfrac{t_1}{1-t_i},\ldots,\dfrac{t_{i-1}}{1-t_i},0,\dfrac{t_{i+1}}{1-t_i},\ldots,\dfrac{t_{n}}{1-t_i}\right)
\end{gather}	
for all $(t_1,\ldots,t_n)\in\Omega_n$ with $t_i<1$ 
$(i=1,\ldots,n)$.
A function $\psi\in \pmb{\Psi}_n$ is called \textit{permutation-symmetric} if $\psi(t_1,\ldots,t_n)=\psi(t_{\sigma(1)},\ldots,t_{\sigma(n)})$ for all $(t_1,\ldots,t_n)\in\Omega_n$ and $\sigma\in S_n$ which is the set of all permutations of the index set $\{1,\ldots,n\}$.

\begin{example}
The functions
\begin{gather}\label{ppsi}
\psi_p(t_1,\ldots,t_n)
:=\begin{cases}
\left(t_1^p+\cdots+t_n^p\right)^{\frac{1}{p}}  & \text{if } p\in[1,\infty),\\
\max\{t_1,\ldots,t_n\} & \text{if } p=\infty
\end{cases} 
\end{gather}
for all $(t_1,\ldots,t_n)\in\Omega_n$ belong to $\pmb{\Psi}_n$ and are  permutation-symmetric.
Note that 
\begin{gather}\label{E3.1-1}
\psi_1(t_1,\ldots,t_n)=1\;\;\text{for all}\;\;(t_1,\ldots,t_n)\in\Omega_n.
\end{gather}	
\begin{proof}
It suffices to verify condition~\eqref{psi} since the other  conditions are obviously satisfied.
Let $(t_1,\ldots,t_n)\in\Omega_n$ and fix $i\in\{1,\ldots,n\}$ with $t_i<1$.
If $p\in[1,\infty)$, then
\begin{align*}
\psi_p(t_1,\ldots,t_n)
&=\left(\sum_{j=1}^{n}t^p_j\right)^{\frac{1}{p}}
\ge \left(\sum_{j\ne i}^{}t^p_j\right)^{\frac{1}{p}}
=(1-t_i)\left(\sum_{j\ne i}^{}\left(\frac{t_j}{1-t_i}\right)^p \right)^{\frac{1}{p}}\\
&=(1-t_i)\cdot\psi_p\left(\dfrac{t_1}{1-t_i},\ldots,\dfrac{t_{i-1}}{1-t_i},0,\dfrac{t_{i+1}}{1-t_i},\ldots,\dfrac{t_{n}}{1-t_i}\right).
\end{align*}	
If $p=\infty$, then
\begin{align*}
\psi_\infty(t_1,\ldots,t_n)
&=\max\{t_1,\ldots,t_n\}
\ge \max_{j\ne i}t_j=(1-t_i)\max_{j\ne i}\frac{t_j}{1-t_i}\\
&=(1-t_i)\cdot\psi_\infty\left(\dfrac{t_1}{1-t_i},\ldots,\dfrac{t_{i-1}}{1-t_i},0,\dfrac{t_{i+1}}{1-t_i},\ldots,\dfrac{t_{n}}{1-t_i}\right).
\end{align*}	
This completes the proof.
\end{proof}	
\end{example}

The next statement collects some basic facts needed for the subsequent analysis.
\begin{proposition}\label{P2.2}
Let $\psi\in\pmb{\Psi}_n$.
The following assertions hold:
\begin{enumerate}
\item\label{P2.2-1}
$\max\{t_1,\ldots,t_n\}\le\psi(t)\le 1$ for all $t:=(t_1,\ldots,t_n)\in\Omega_n$;
\item\label{P2.2-2}
if $\psi$ is  \textit{permutation-symmetric}, then it attains the minimum at $(\frac{1}{n},\ldots,\frac{1}{n})$.
\end{enumerate}
\end{proposition}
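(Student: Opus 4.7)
My plan is to handle the two assertions separately, as they use disjoint features of the structure: the inequality \eqref{psi} for the first part and permutation-symmetry combined with convexity for the second.

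For the upper bound in \eqref{P2.2-1}, the idea is simply to use convexity of $\psi$ together with the representation $t=\sum_{i=1}^n t_i\mathbf{e}_i$ of any element of $\Omega_n$ as a convex combination of the standard basis vectors. Since $\psi(\mathbf{e}_i)=1$ for every $i$, convexity yields $\psi(t)\le\sum_{i=1}^n t_i\psi(\mathbf{e}_i)=\sum_{i=1}^n t_i=1$. No additional ingredients are needed here.

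For the lower bound in \eqref{P2.2-1}, I plan to proceed by induction on the number $k$ of nonzero coordinates of $t$, showing the stronger statement $\psi(t)\ge t_j$ for each $j\in\{1,\ldots,n\}$. When $k=1$ we must have $t=\mathbf{e}_j$ for some $j$, giving $\psi(t)=1=t_j$ and the nontrivial coordinate inequality trivially. For the inductive step with $k\ge 2$, fix $j$ and pick any other index $i\ne j$ with $t_i>0$; since $t_j>0$ as well (otherwise the inequality is trivial), we have $t_i<1$ and can apply the defining inequality \eqref{psi}. The reduced point $\hat t^{\,i}:=\bigl(t_1/(1-t_i),\ldots,t_{i-1}/(1-t_i),0,t_{i+1}/(1-t_i),\ldots,t_n/(1-t_i)\bigr)$ lies in $\Omega_n$ (its coordinates sum to $1$) and has only $k-1$ nonzero entries, so the inductive hypothesis gives $\psi(\hat t^{\,i})\ge t_j/(1-t_i)$. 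Combined with \eqref{psi}, this yields $\psi(t)\ge(1-t_i)\cdot t_j/(1-t_i)=t_j$, as required. The main subtlety, and the step most worth checking carefully, is ensuring that the reduction actually decreases the number of nonzero coordinates and that the normalization keeps $\hat t^{\,i}$ inside $\Omega_n$ so that \eqref{psi} is applicable again inside the induction.

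For assertion \eqref{P2.2-2}, the argument is a standard symmetrization: given any $t=(t_1,\ldots,t_n)\in\Omega_n$ and any permutation $\sigma\in S_n$, permutation-symmetry gives $\psi(t)=\psi(t_{\sigma(1)},\ldots,t_{\sigma(n)})$, so $\psi(t)$ equals the average over $S_n$ of these values. Applying Jensen's inequality (i.e.\ convexity of $\psi$) to this average and observing that $\tfrac{1}{n!}\sum_{\sigma\in S_n}(t_{\sigma(1)},\ldots,t_{\sigma(n)})=(\tfrac1n,\ldots,\tfrac1n)$, we obtain $\psi(t)\ge\psi(\tfrac1n,\ldots,\tfrac1n)$. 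Since $(\tfrac1n,\ldots,\tfrac1n)\in\Omega_n$, this shows the minimum is attained there. I do not anticipate any genuine obstacle here; the only point to be careful about is writing the symmetrization identity cleanly before invoking convexity.
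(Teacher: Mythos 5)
Your proof is correct. For assertion \eqref{P2.2-2} you take essentially the paper's route: the paper averages $\psi$ over the $n$ cyclic shifts of $(t_1,\ldots,t_n)$ rather than over all $n!$ permutations, but both averages equal $(\tfrac1n,\ldots,\tfrac1n)$ and the convexity step is identical. The real difference is in assertion \eqref{P2.2-1}, where the paper gives no argument at all and simply cites \cite{Cuo25}; you supply a self-contained proof. Your upper bound (writing $t=\sum_i t_i\mathbf{e}_i$ and using convexity with $\psi(\mathbf{e}_i)=1$) is the standard one, and your induction on the number of nonzero coordinates for the lower bound is sound: the reduced point $\hat t^{\,i}$ does lie in $\Omega_n$, has exactly one fewer nonzero entry, and the choice $i\ne j$ with $t_i,t_j>0$ forces $t_i\le 1-t_j<1$ so that \eqref{psi} applies. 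One small imprecision: when $t_j=0$ the inequality $\psi(t)\ge t_j$ is ``trivial'' only if you already know $\psi\ge 0$, which is not among the stated hypotheses. This does not damage the argument, since the inductive step only ever invokes the hypothesis at indices with strictly positive coordinate (namely $\hat t^{\,i}_j=t_j/(1-t_i)>0$), and the final bound $\psi(t)\ge\max_i t_i$ only needs the maximizing index, whose coordinate is at least $\tfrac1n>0$; but it would be cleaner to state the inductive claim as ``$\psi(t)\ge t_j$ for every $j$ with $t_j>0$.'' What your approach buys is a proof that makes the proposition independent of the external reference; what the paper's citation buys is brevity.
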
	

\begin{proof}
Assertion \eqref{P2.2-1} is proved in \cite{Cuo25}.
We now prove assertion \eqref{P2.2-2}.
Let $(t_1,\ldots,t_n)\in\Omega_n$.
By the permutation-symmetry and convexity of $\psi$, we have
\begin{align*}
\psi\left(\frac{1}{n},\ldots,\frac{1}{n}\right)
&=\psi \left(\frac{t_1+\ldots+t_n}{n},\ldots,\frac{t_1+\ldots+t_n}{n}\right)\\
&=\psi\left(\dfrac{1}{n}(t_1,t_2,\ldots,t_n)+\dfrac{1}{n}(t_2,\ldots,t_n,t_1)+\ldots+\dfrac{1}{n}(t_n,t_{n-1},\ldots,t_1)\right)\\
&\le\dfrac{1}{n}\psi(t_1,t_2,\ldots,t_n)+\dfrac{1}{n}\psi(t_2,\ldots,t_n,t_1)+\ldots+\dfrac{1}{n}\psi(t_n,t_{n-1},\ldots,t_1)\\
&=\psi(t_1,\ldots,t_n).
\end{align*}	
Thus, $(\frac{1}{n},\ldots,\frac{1}{n})$ is a minimizer of $\psi$.
The proof is complete.
\end{proof}	

The next statement provides primal and dual norm constructions on product spaces via elements of $\pmb{\Psi}_n$; see \cite{Cuo25}.
\begin{theorem}\label{T2.3}
Let $(X,\|\cdot\|)$ be a normed space, and $\psi\in\pmb{\Psi}_n$.
Define
\begin{equation}\label{T2.2-1}
\vertiii{x}_\psi
:= \begin{cases}
\left(\sum_{i=1}^n\|x_i\|\right)\cdot\psi\left(\dfrac{\|x_1\|}{\sum_{i=1}^n\|x_i\|},\ldots,\dfrac{\|x_{n}\|}{\sum_{i=1}^n\|x_i\|}\right)  & \text{if } x\ne0_{X^n},\\
0 & \text{otherwise}
\end{cases} 
\end{equation}
for all $x:=(x_1,\ldots,x_n)\in X^n$.
The following assertions hold:
\begin{enumerate}
\item\label{T2.3-1}
$\vertiii{\cdot}_\psi$ is a norm on $X^n$.
Moreover, 
$\vertiii{\cdot}_\psi$ is strictly convex if and only if $\|\cdot\|$ and $\psi$ are strictly convex;
\item\label{T2.3-3}
the corresponding dual norm is given by
\begin{gather}\label{T2.4-1}
\vertiii{x^*}^*_{\psi} =\max_{(t_1,\ldots,t_n)\in\Omega_n}\dfrac{\sum_{i=1}^nt_i\|x^*_i\|^*}{\psi (t_1,\ldots,t_{n})}
\end{gather}	
for all $x^*:=(x^*_1,\ldots,x^*_n)\in(X^*)^n$.
\end{enumerate}	
\end{theorem}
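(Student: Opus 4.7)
Introduce the function $N:\R^n_+\to\R_+$ defined by $N(u):=(\sum_{i=1}^nu_i)\psi(u/\sum_{i=1}^nu_i)$ for $u\ne 0$ and $N(0):=0$, so that $\vertiii{x}_\psi=N(\|x_1\|,\ldots,\|x_n\|)$. Since $\psi$ is convex, $N$ is the perspective of $\psi$: it is positively homogeneous of degree one and convex on $\R^n_+$, hence subadditive. Positive definiteness of $\vertiii{\cdot}_\psi$ follows from $\psi\ge 1/n$ on $\Omega_n$ (Proposition~\ref{P2.2}\eqref{P2.2-1}), while absolute homogeneity is immediate from the definition.

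The crux of assertion~\eqref{T2.3-1} is the triangle inequality, for which I would prove that $N$ is coordinate-wise nondecreasing on $\R^n_+$. Translated through $N$, condition~\eqref{psi} states exactly that $N(\tilde u^{(i)})\le N(u)$, where $\tilde u^{(i)}$ is obtained from $u$ by setting its $i$-th coordinate to $0$. The map $\delta\mapsto N(u-\delta\mathbf{e}_i)$ is convex on $[0,u_i]$, and its value at $u_i$ does not exceed its value at $0$; convexity then forces it to stay below the value at $0$ throughout the interval. Iterating coordinate by coordinate yields $v\le u\Rightarrow N(v)\le N(u)$. Combining this monotonicity with subadditivity of $N$ and the triangle inequality of $\|\cdot\|$ gives
\[
\vertiii{x+y}_\psi=N(\|x_1+y_1\|,\ldots,\|x_n+y_n\|)\le N(\|x_1\|+\|y_1\|,\ldots,\|x_n\|+\|y_n\|)\le \vertiii{x}_\psi+\vertiii{y}_\psi.
\]

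For the strict convexity equivalence, the ($\Rightarrow$) direction is easy: embedding $X$ in $X^n$ via $x\mapsto(x,0,\ldots,0)$ transfers strict convexity of $\vertiii{\cdot}_\psi$ to $\|\cdot\|$, while, for a fixed unit vector $e\in X$, the substitution $x_i:=t_ie$ realises $\vertiii{(t_1e,\ldots,t_ne)}_\psi=\psi(t)$ for $t\in\Omega_n$, transferring strict convexity to $\psi$. For ($\Leftarrow$), given distinct $x,y\in X^n$ with $\vertiii{x}_\psi=\vertiii{y}_\psi=1$, I would split into two cases. If $(\|x_i\|)_i\ne(\|y_i\|)_i$, strict convexity of $\psi$ makes $N$ strictly convex off the rays through the origin, and monotonicity yields $\vertiii{(x+y)/2}_\psi<1$. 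If $(\|x_i\|)_i=(\|y_i\|)_i=:u$, some index $i$ satisfies $x_i\ne y_i$ with $\|x_i\|=\|y_i\|$; strict convexity of $\|\cdot\|$ gives $\|(x_i+y_i)/2\|<u_i$, and a strict refinement of the one-dimensional convexity argument above (ruling out $N$ being constant along the segment from $u$ to $\tilde u^{(i)}$, using strict convexity of $\psi$ when $u$ has at least two positive coordinates, and using strict convexity of $\|\cdot\|$ directly when $u=u_i\mathbf{e}_i$) delivers $\vertiii{(x+y)/2}_\psi<1$.

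Assertion~\eqref{T2.3-3} is a straightforward dualization. Starting from $\vertiii{x^*}^*_\psi=\sup\{\langle x^*,x\rangle:\vertiii{x}_\psi\le 1\}$ and optimizing over the direction of each $x_i$ with $\|x_i\|=u_i$ fixed reduces the problem to $\sup\{\sum_iu_i\|x^*_i\|^*:u\in\R^n_+,\ N(u)\le 1\}$. Parametrising $u=\lambda t$ with $t\in\Omega_n$ and $\lambda\ge 0$, the constraint becomes $\lambda\le 1/\psi(t)$ (well-defined since $\psi\ge 1/n>0$), so the inner supremum is attained at $\lambda=1/\psi(t)$ and equals $(\sum_it_i\|x^*_i\|^*)/\psi(t)$; taking the supremum over the compact set $\Omega_n$ with continuous integrand gives~\eqref{T2.4-1} and shows the maximum is attained. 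The main obstacle is the monotonicity step for $N$ and its strict refinement in the ($\Leftarrow$) direction of the strict convexity equivalence; the rest reduces to routine convex-analytic manipulations.
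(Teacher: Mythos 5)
Your proposal is correct. Note that the paper itself gives no proof of Theorem~\ref{T2.3} — it defers to \cite{Cuo25} (and ultimately to the construction of \cite{SaiKatTak00}) — so there is no internal argument to compare against; your route via the perspective function $N$ is precisely the classical one for absolute normalized norms: subadditivity plus positive homogeneity of $N$, coordinate-wise monotonicity of $N$ extracted from condition~\eqref{psi} by restricting the convex function $\delta\mapsto N(u-\delta\mathbf{e}_i)$ to $[0,u_i]$, and the two-stage supremum for the dual norm. The only genuinely delicate point, the strict inequality $N(\tilde u^{(i)})<N(u)$ needed in your second case when $x_i+y_i=0$, is covered by your parenthetical remark (it follows from strict convexity of $\psi$ off rays combined with the monotonicity already established, and from strict convexity of $\|\cdot\|$ alone when $u=u_i\mathbf{e}_i$), so no gap remains.
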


\begin{remark}\label{R2.5}
\begin{enumerate}
\item\label{R2.5-2}
Let $\psi\in\pmb{\Psi}_n$.
Define a conjugate-type function of $\psi$  by
\begin{gather}\label{psi1}
	\psi^*(s):=\max_{t\in\Omega_n}\dfrac{\langle t,s\rangle}{\psi(t)}
\end{gather}	
for all $s\in\Omega_n$.
By Theorem~\ref{T2.3}\;\eqref{T2.3-3}, we have  $\vertiii{\cdot}^*_\psi=\vertiii{\cdot}_{\psi^*}$.
 Moreover, one can verify that $\psi^*\in\pmb{\Psi}_n$; see \cite{Cuo25,Dho05,Mit05}.
\item
Let $\vertiii{\cdot}_p$ and $\psi_p$ be given by \eqref{pnorm} and \eqref{ppsi}, respectively. 
By \eqref{T2.2-1}, we have $\vertiii{\cdot}_p=\vertiii{\cdot}_{\psi_p}$.
In view of \eqref{T2.4-1} and \eqref{psi1},
\begin{equation*}
\psi^*_{p}(s_1,\ldots,s_n)
= \begin{cases}
\max\{s_1,\ldots,s_n\}	& \text{if } p=1,\\
\left(s_1^q+\ldots+s_n^q\right)^{\frac{1}{q}}  & \text{if } p\in(1,\infty]
\end{cases} 
\end{equation*}
for all $(s_1,\ldots,s_n)\in\Omega_n$.
\item\label{R2.5-1}
By Proposition~\ref{P2.2}\;\eqref{P2.2-1} and definition 	\eqref{T2.2-1}, we have $\vertiii{\cdot}_\infty\le \vertiii{\cdot}_\psi\le \vertiii{\cdot}_1$ and $\vertiii{\cdot}^*_\infty\le \vertiii{\cdot}^*_\psi\le \vertiii{\cdot}^*_1$.
\end{enumerate}
\end{remark}	

The following statements play a key role of our study.
\begin{proposition}\label{P3.5+}
Let $(X,\|\cdot\|)$ be a normed space, $\psi\in\pmb{\Psi}_n$, $x:=(x_1,\ldots,x_n)\in X^n$ and $x^*:=(x^*_1,\ldots,x^*_n)\in (X^*)^n$.
The following assertions hold.
\begin{enumerate}
\item\label{P3.5-1}
$\sum_{i=1}^{n}\|x^*_i\|^*\cdot\|x_i\|\le\vertiii{x^*}_{\psi^*}\cdot\vertiii{x}_\psi.$
\item\label{P3.5-2}
If 
\begin{gather}\label{P2.6-2.1}
\ang{x^*,x}=\vertiii{x^*}_{\psi^*}\cdot\vertiii{x}_\psi,
\end{gather}	
then 
\begin{gather}\label{P2.6-2.2}
\ang{x^*_i,x_i}= \|x^*_i\|^*\cdot\|x_i\|\;\;(i=1,\ldots,n).
\end{gather}	
\item\label{P3.6-1}
Equality \eqref{P2.6-2.1} holds for $\psi:=\psi_\infty$ if and only if condition \eqref{P2.6-2.2} is satisfied, and 
\begin{gather}\label{P2.6-4.1}
\left(\|x_i\|-\max_{1\le i\le n}\|x_i\|\right)\cdot\|x^*_i\|^*=0\;\;(i=1,\ldots,n).
\end{gather}
\item\label{P3.6-2}
Equality \eqref{P2.6-2.1} holds for $\psi:=\psi_1$ if and only if condition \eqref{P2.6-2.2} is satisfied, and 
\begin{gather}\label{P2.6-3.1}
\left(\|x^*_i\|^*-\max_{1\le i\le n}\|x^*_i\|^*\right)\cdot\|x_i\|=0\;\;(i=1,\ldots,n).
\end{gather}
\item\label{P3.6-3}
Equality \eqref{P2.6-2.1} holds for $\psi:=\psi_p$ with $p\in(1,\infty)$  if and only if condition \eqref{P2.6-2.2} is satisfied, and 
\begin{gather}\label{P2.6-5.1}
\dfrac{\|x^*_i\|^{*q}}{\sum_{i=1}^{n}\|x^*_i\|^{*q}} =\dfrac{\|x_i\|^p}{\sum_{i=1}^{n}\|x_i\|^p}\;\;(i=1,\ldots,n).
\end{gather}
\end{enumerate}
\end{proposition}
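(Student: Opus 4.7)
The plan is to reduce everything to a single master inequality, namely assertion \eqref{P3.5-1}, and then exploit the case of equality in that inequality together with the explicit formulas for $\vertiii{\cdot}_\psi$ and $\vertiii{\cdot}_{\psi^*}$ available for the three concrete choices $\psi\in\{\psi_1,\psi_\infty,\psi_p\}$.

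First I would handle \eqref{P3.5-1}. The case $x=0$ is trivial, so assume $\sum_i\|x_i\|>0$ and set $t^x_i:=\|x_i\|/\sum_j\|x_j\|$, so that $t^x\in\Omega_n$ and $\vertiii{x}_\psi=(\sum_j\|x_j\|)\psi(t^x)$ by \eqref{T2.2-1}. Applying the dual norm formula \eqref{T2.4-1} at the admissible point $t^x$ yields $\sum_i t^x_i\|x^*_i\|^*\le\vertiii{x^*}_{\psi^*}\psi(t^x)$, and multiplying by $\sum_j\|x_j\|$ gives $\sum_i\|x^*_i\|^*\|x_i\|\le\vertiii{x^*}_{\psi^*}\vertiii{x}_\psi$. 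Assertion \eqref{P3.5-2} follows immediately by chaining the two inequalities $\ang{x^*,x}=\sum_i\ang{x^*_i,x_i}\le\sum_i\|x^*_i\|^*\|x_i\|\le\vertiii{x^*}_{\psi^*}\vertiii{x}_\psi$: equality of the extremes forces equality in each summand of the first inequality, giving \eqref{P2.6-2.2}.

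For parts \eqref{P3.6-1}--\eqref{P3.6-3}, in each case \eqref{P2.6-2.1} is equivalent to the conjunction of \eqref{P2.6-2.2} (the first inequality above being tight) with the additional equality
\begin{equation*}
\sum_{i=1}^{n}\|x^*_i\|^*\|x_i\|=\vertiii{x^*}_{\psi^*}\vertiii{x}_\psi,
\end{equation*}
and my task is to rewrite this second equality in each specific case. Using $\psi^*_\infty=\psi_1$ and $\psi^*_1=\psi_\infty$ (Remark~\ref{R2.5}) together with \eqref{T2.2-1}, for $\psi=\psi_\infty$ the right-hand side becomes $(\sum_i\|x^*_i\|^*)\max_j\|x_j\|$, so the equality reads $\sum_i\|x^*_i\|^*(\max_j\|x_j\|-\|x_i\|)=0$; as every summand is nonnegative, each must vanish, giving \eqref{P2.6-4.1}. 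The argument for $\psi=\psi_1$ is symmetric, with the roles of $x_i$ and $x^*_i$ interchanged, producing \eqref{P2.6-3.1}. For $\psi=\psi_p$ with $1<p<\infty$, the equality becomes
\begin{equation*}
\sum_{i=1}^{n}\|x^*_i\|^*\|x_i\|=\Bigl(\sum_{i=1}^{n}\|x^*_i\|^{*q}\Bigr)^{1/q}\Bigl(\sum_{i=1}^{n}\|x_i\|^{p}\Bigr)^{1/p},
\end{equation*}
which is precisely the equality case of the classical H\"older inequality for the nonnegative sequences $(\|x^*_i\|^*)$ and $(\|x_i\|)$; this forces $(\|x^*_i\|^{*q})$ and $(\|x_i\|^p)$ to be proportional, which after normalization is exactly \eqref{P2.6-5.1}. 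In each case the converse direction is a direct verification by substituting the stated condition back into the identity above.

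The only mildly delicate point, and the one where I would be most careful, is the converse direction for $\psi=\psi_p$ and the handling of degenerate proportionality in the H\"older step: if one of the sums $\sum_i\|x_i\|^p$ or $\sum_i\|x^*_i\|^{*q}$ vanishes, then either $x=0$ or $x^*=0$ and both sides of \eqref{P2.6-2.1} are zero, so the stated proportionality in \eqref{P2.6-5.1} should be read under the conventions $0/0$ fixed by \eqref{T2.2-1} (with the ratio being immaterial); otherwise, both denominators are strictly positive and the proportionality obtained from H\"older can be normalized unambiguously. Apart from this bookkeeping, the proof is a clean reduction of each item to either \eqref{P3.5-1} or the sharp form of H\"older's inequality.
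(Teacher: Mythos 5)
Your proposal is correct and follows essentially the same route as the paper: establish the master inequality (i) by evaluating the dual-norm/conjugate formula at the normalized point $t^x\in\Omega_n$, deduce (ii) by chaining $\ang{x^*,x}\le\sum_i\|x^*_i\|^*\|x_i\|\le\vertiii{x^*}_{\psi^*}\vertiii{x}_\psi$, and then characterize equality in the second inequality for each of $\psi_\infty,\psi_1,\psi_p$ (the last via the equality case of H\"older). The only differences are cosmetic — you phrase the equality cases as a vanishing sum of nonnegative terms where the paper argues by contradiction, and you flag the degenerate $x=0$ or $x^*=0$ case in (v) more explicitly than the paper does — so there is nothing further to add.
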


\begin{proof}
\begin{enumerate}	
\item 
By \eqref{psi1}, $\ang{t,s}\le\psi(t)\psi^*(s)$ for all $t,s\in\Omega_n$.
Applying this inequality for $t:=\left(\frac{\|x_1\|}{\sum_{i=1}^{n}\|x_i\|},\ldots,\frac{\|x_n\|}{\sum_{i=1}^{n}\|x_i\|}\right)$ and $s:=\left(\frac{\|x^*_1\|^*}{\sum_{i=1}^{n}\|x^*_i\|^*},\ldots,\frac{\|x^*_n\|^*}{\sum_{i=1}^{n}\|x^*_i\|^*}\right)$, we obtain
\begin{gather*}
	\sum_{i=1}^{n}\|x^*_i\|^*\cdot\|x_i\|\le \left(\sum_{i=1}^{n}\|x^*_i\|^*\right)\psi^*(s)\cdot \left(\sum_{i=1}^{n}\red{\|x^*_i\|^*}\right) \psi(t)\overset{ \eqref{T2.2-1}}{=} \vertiii{x^*}_{\psi^*}\cdot\vertiii{x}_\psi.
\end{gather*}
\item 
is a direct consequence of  assertion  \eqref{P3.5-1} and the fact  $\ang{x^*,x}\le\sum_{i=1}^{n}\red{\|x^*_i\|^*}\cdot\|x_i\|$.
\item
In view of assertion \eqref{P3.5-1} with $\psi:=\psi_\infty$, we have
\begin{gather}\label{P2.6-7}
	\sum_{i=1}^{n}\|x^*_i\|^*\cdot\|x_i\|\le \max_{1\le i\le n}\|x_i\|\cdot \sum_{i=1}^{n}\|x^*_i\|^*.
\end{gather}	
Suppose that
\begin{gather}\label{P2.6-11}
	\ang{x^*,x}=\max_{1\le i\le n}\|x_i\|\cdot \sum_{i=1}^{n}\|x^*_i\|^*.
\end{gather}	
Then condition \eqref{P2.6-2.2} is satisfied.
If $x^*_i\ne 0$ and $\|x_i\|<\max_{1\le j\le n}\|x_j\|$ for some $i\in\{1,\ldots,n\}$, then 
inequality of \eqref{P2.6-7} is strict, which contradicts \eqref{P2.6-11}.
Thus, condition \eqref{P2.6-4.1} is satisfied.
Conversely, if conditions \eqref{P2.6-2.2} and \eqref{P2.6-4.1} are satisfied, then it is clear that condition \eqref{P2.6-11} holds true.
\item 
In view of assertion \eqref{P3.5-1} with $\psi:=\psi_1$, we have
\begin{gather}\label{P2.6-6}
	\sum_{i=1}^{n}\|x^*_i\|^*\cdot\|x_i\|\le \max_{1\le i\le n}\|x^*_i\|^*\cdot \sum_{i=1}^{n}\|x_i\|.
\end{gather}	
Suppose that 
\begin{gather}\label{P2.6-10}
	\ang{x^*,x}=\max_{1\le i\le n}\|x^*_i\|^*\cdot \sum_{i=1}^{n}\|x_i\|.
\end{gather}	
Then condition \eqref{P2.6-2.2} is satisfied.
If $x_i\ne 0$ and $\|x^*_i\|^*<\max_{1\le j\le n}\|x^*_j\|^*$ for some $i\in\{1,\ldots,n\}$, then 
inequality of \eqref{P2.6-6} is strict, which contradicts \eqref{P2.6-10}.
Thus, condition \eqref{P2.6-3.1} is satisfied.
Conversely, if conditions \eqref{P2.6-2.2} and \eqref{P2.6-3.1} are satisfied, then it is clear that condition \eqref{P2.6-10} holds true.
\item 
In view of assertion \eqref{P3.5-1} with $\psi:=\psi_p$, we have
\begin{gather}\label{P2.6-8}
	\sum_{i=1}^{n}\|x^*_i\|^*\cdot\|x_i\|\le \left(\sum_{i=1}^{n}\|x^*_i\|^{*q}\right)^{\frac{1}{q}}\cdot \left(\sum_{i=1}^{n}\|x_i\|^{p}\right)^{\frac{1}{p}}.
\end{gather}	
Suppose that
\begin{gather}\label{P2.6-9} \ang{x^*,x}=\left(\sum_{i=1}^{n}\|x^*_i\|^{*q}\right)^{\frac{1}{q}}\cdot \left(\sum_{i=1}^{n}\|x_i\|^{p}\right)^{\frac{1}{p}}.
\end{gather}
Then condition \eqref{P2.6-2.2} is satisfied, and 
$\|x_i\|^p=\lambda\|x^*_i\|^{*q}$ $(i=1,\ldots,n)$ for some $\lambda\ge 0$.
We have
\begin{gather*}
	\sum_{i=1}^{n}\|x_i\|^p=\lambda\cdot\sum_{i=1}^{n}\|x^*_i\|^{*q}\Rightarrow \lambda= \frac{\sum_{i=1}^{n}\|x_i\|^p}{\sum_{i=1}^{n}\|x^*_i\|^{*q}}.
\end{gather*}	
Thus, condition \eqref{P2.6-5.1} is satisfied. 
Conversely, suppose that conditions \eqref{P2.6-2.2} and \eqref{P2.6-5.1} are satisfied.
Then
\begin{align*}
	\ang{x^*,x}
	=\sum_{i=1}^{n}\|x^*_i\|^*\cdot\|x_i\|
	=\left(\dfrac{\sum_{i=1}^{n}\|x^*_i\|^{*q}}{\sum_{i=1}^{n}\|x_i\|^{p}}\right)^\frac{1}{q}\sum_{i=1}^{n}\|x_i\|^{p}
	=\left(\sum_{i=1}^{n}\|x^*_i\|^{*q}\right)^{\frac{1}{q}}\cdot \left(\sum_{i=1}^{n}\|x_i\|^{p}\right)^{\frac{1}{p}}.
\end{align*}	
Thus, condition \eqref{P2.6-9} is satisfied.
\end{enumerate}
The proof is complete.
\end{proof}

\begin{lemma}\label{L1.1}
Let $(X,\|\cdot\|)$ be a normed  space, and $x^*\in X^*$.
Define
\begin{gather}\label{A+}
{\text{\rm T}}_{\|\cdot\|}(x^*):=\{x\in X\mid \langle x^*, x\rangle=\|x^*\|^*\cdot \|x\|\}.
\end{gather}
The following assertions hold.
\begin{enumerate}
\item\label{L1.1-1}
If $X$ is a Hilbert space, then there exists a unique vector $\bx\in X$ such that
\begin{gather}\label{T2.12-1}
{\text{\rm T}}_{\|\cdot\|}(x^*)=\left\{\lambda \bx\mid \lambda\ge 0\right\}.
\end{gather}	
\end{enumerate}
Suppose that $X:=\R^n$ and $x^*:=(x^*_{1},\ldots,x^*_{n})$.
\begin{enumerate}[start=2]
\item\label{L1.1-4}
If  $\|\cdot\|:=\|\cdot\|_\infty$, then 
\begin{gather*}
{\text{\rm T}}_{\|\cdot\|_\infty}(x^*)=\left\{(x_1,\ldots,x_n)\in\R^n\mid 
x^*_{i}\cdot x_{i}\ge 0,\; |x_{i}|=\|x\|_\infty\;\;\text{if}\;\;x^*_{i}\ne 0
\right\}.
\end{gather*}	
\item\label{L1.1-3}
If $\|\cdot\|:=\|\cdot\|_1$, then 
\begin{gather*}
{\text{\rm T}}_{\|\cdot\|_1}(x^*)=\left\{(x_1,\ldots,x_n)\in\R^n\mid
x^*_{i}\cdot x_{i}\ge 0,\; x_i=0\;\;\text{if}\;\; |x^*_i|<\|x^*\|_\infty
\right\}.
\end{gather*}	
\item\label{L1.1-2}
If $\|\cdot\|:=\|\cdot\|_p$ with $p\in(1,\infty)$, then 
\begin{gather*}
{\text{\rm T}}_{\|\cdot\|_p}(x^*)=\left\{\left(\lambda\text{\rm sgn}(x^*_1) |x^{*}_{1}|^{\frac{q}{p}},\ldots,\lambda \text{\rm sgn}(x^*_n)  |x^{*}_{n}|^{\frac{q}{p}}\right)\in\R^n\mid \lambda\ge 0\right\}.
\end{gather*}	
\end{enumerate}
\end{lemma}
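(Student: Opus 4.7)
The plan is to analyze each assertion by expanding the defining equality $\langle x^*,x\rangle = \|x^*\|^*\cdot\|x\|$ into a chain of two elementary inequalities and characterizing when each one becomes an equality.

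For assertion~\eqref{L1.1-1}, I would invoke the Riesz representation theorem to produce the unique $\bar x\in X$ satisfying $\langle x^*,x\rangle = (\bar x, x)$ for all $x\in X$ and $\|\bar x\| = \|x^*\|^*$. The equality case of the Cauchy--Schwarz inequality then gives $(\bar x, x) = \|\bar x\|\cdot\|x\|$ if and only if $x = \lambda \bar x$ for some $\lambda \ge 0$, which is precisely~\eqref{T2.12-1}.

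For assertions~\eqref{L1.1-4} and~\eqref{L1.1-3}, the strategy is to combine the pointwise bound $\langle x^*,x\rangle \le \sum_{i=1}^n |x^*_i|\cdot|x_i|$ (with equality iff $x^*_i x_i \ge 0$ for every $i$) with one of two follow-up estimates: $\sum_{i=1}^n |x^*_i|\cdot|x_i| \le \|x^*\|_1 \cdot \|x\|_\infty$ for (ii), whose equality condition is $|x_i| = \|x\|_\infty$ whenever $x^*_i \ne 0$, or $\sum_{i=1}^n |x^*_i|\cdot|x_i| \le \|x^*\|_\infty \cdot \|x\|_1$ for (iii), whose equality condition is $x_i = 0$ whenever $|x^*_i| < \|x^*\|_\infty$. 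Reading off both equality conditions simultaneously then yields the stated descriptions of $\mathrm{T}_{\|\cdot\|_\infty}(x^*)$ and $\mathrm{T}_{\|\cdot\|_1}(x^*)$.

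Assertion~\eqref{L1.1-2} proceeds in the same spirit, now with H\"older's inequality $\sum_{i=1}^n |x^*_i|\cdot|x_i| \le \bigl(\sum_i |x^*_i|^q\bigr)^{1/q}\bigl(\sum_i |x_i|^p\bigr)^{1/p}$ in place of the $\ell^1$--$\ell^\infty$ bound. Equality in H\"older combined with the sign condition $x^*_i x_i \ge 0$ forces $|x_i|^p = \mu\,|x^*_i|^q$ for some $\mu \ge 0$ and $x_i$ to have the same sign as $x^*_i$, whence $x_i = \lambda\,\mathrm{sgn}(x^*_i)\,|x^*_i|^{q/p}$ with $\lambda := \mu^{1/p} \ge 0$; the converse is verified by direct substitution. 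The main obstacle is the bookkeeping in this case: keeping the conjugate exponents straight so that $(|x^*_i|^q)^{1/p} = |x^*_i|^{q/p}$ emerges correctly, and checking that indices with $x^*_i = 0$ are treated consistently, where the proportionality relation forces $x_i = 0$ as required.
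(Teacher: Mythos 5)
Your proposal is correct and follows essentially the same route as the paper: part (i) via the Riesz representation theorem plus the equality case of Cauchy--Schwarz, and parts (ii)--(iv) by splitting $\langle x^*,x\rangle=\|x^*\|^*\cdot\|x\|$ into the termwise bound and the $\ell^1$--$\ell^\infty$ (resp.\ H\"older) bound and reading off both equality conditions. The only difference is that the paper packages that two-inequality analysis as Proposition~\ref{P3.5+} and cites it, whereas you inline it.
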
	

\begin{proof}
\begin{enumerate}
\item 
By the Riesz representation theorem \cite[Theorem~5.5]{Bre11}, there exists a unique 
vector $\bar x\in X$ such that 
\begin{gather*}
\|x^*\|^*=\|\bar x\|\;\;\text{and}\;\;
\langle x^*,x\rangle =\langle \bar x,x\rangle\;\;\text{for all}\;\;x\in X.
\end{gather*}	
Let $x\in {\text{\rm T}}_{\|\cdot\|}(x^*)$.
Then $\langle \bx,x\rangle=\|\bar x\|\cdot\|x\|$.
Thus, there exists a scalar $\lambda\ge 0$ such that $x=\lambda \bx$.
Conversely, let $x:=\lambda \bx$ for some $\lambda\ge 0$.
Then
\begin{gather*}
\langle x^*, x\rangle=\langle \bx,x\rangle=\langle \bx, \lambda \bx\rangle=\|\bx\|\cdot\lambda\|\bx\|=\|x^*\|^*\cdot\|x\|.
\end{gather*}	
Thus $x\in {\text{\rm T}}_{\|\cdot\|}(x^*)$.
\item
is a direct consequence of Proposition~\ref{P3.5+}\;\eqref{P3.6-1}.
\item 
is a direct consequence of Proposition~\ref{P3.5+}\;\eqref{P3.6-2}.
\item
Let $x:=(x_1,\ldots,x_n)\in {\text{\rm T}}_{\|\cdot\|_p}(x^*)$.
Then
\begin{gather}\label{T2.12-4}
\ang{x^*,x}=\left(\sum_{i=1}^{n}|x_{i}|^p\right)^{\frac{1}{p}}\cdot\left(\sum_{i=1}^{n}|x^*_{i}|^q\right)^{\frac{1}{q}}.
\end{gather}	
By Proposition~\ref{P3.5+}\;\eqref{P3.6-3}, we have
$x^*_{i}\cdot x_i\ge 0$ and  $|x_{i}|^p=\gamma\cdot|x^*_{i}|^q$ $(i=1,\ldots,n)$ for some $\gamma\ge 0$. 
Thus $x_i=\lambda \text{sgn}(x^*_i)|x^{*}_{i}|^{\frac{q}{p}}$ $(i=1,\ldots,n)$ with $\lambda:=\gamma^{\frac{1}{p}}$.
Conversely, let $x:=(x_1,\ldots,x_n)$ with 
$x_i:=\lambda\text{sgn}(x^*_i)|x^{*}_{i}|^{\frac{q}{p}}$ $(i=1,\ldots,n)$ for some $\lambda\ge 0$.
Then $x^*_{i}\cdot x_i\ge 0$ \red{$(i=1,\ldots,n)$} and
\begin{align*}
\ang{x^*,x}
&=\sum_{i=1}^{n}|x^*_{i}|\cdot|x_{i}|
=\lambda\cdot\sum_{i=1}^{n}|x^*_{i}|^q\\
&=\lambda \left(\sum_{i=1}^{n}|x^*_{i}|^q\right)^{\frac{1}{p}}\cdot
\left(\sum_{i=1}^{n}|x^*_{i}|^q\right)^{\frac{1}{q}}
=\left(\sum_{i=1}^{n}|x_{i}|^p\right)^{\frac{1}{p}}\cdot \left(\sum_{i=1}^{n}|x^*_{i}|^q\right)^{\frac{1}{q}}. 
\end{align*}	
Thus, condition \eqref{T2.12-4} is satisfied.
Hence, $x\in {\text{\rm T}}_{\|\cdot\|_p}(x^*)$.
\end{enumerate}
The proof is complete.
\end{proof}	

\begin{remark}
Lemma~\ref{L1.1} provides explicit formulas for the set~\eqref{A+} in some particular cases in finite and infinite dimensional spaces. 
Although the proof uses standard arguments from functional analysis, 
to the best of our knowledge, such descriptions  have not been explicitly presented in the literature.
\end{remark}

\section{General Norm Minimization Problem}\label{S3}
Let $X$ be a vector space, and $v_1,\ldots,v_n\in X$ be distinct vectors.
Consider the problem:
\begin{gather}
\label{P}
\tag{$P$}
{\rm{minimize }}\;\;f_{\vertiii{\cdot}}(u):=\vertiii{(u-v_1,\ldots,u-v_n)}\;\;\text{for all}\;\;
u\in X,
\end{gather}
where $\vertiii{\cdot}$ is a given norm on $X^n$.
Denote $\text{\rm Sol}(P,\vertiii{\cdot})$ the set of all solutions of problem \eqref{P}.
When the norm on $X^n$ is known from the context, we shall simply write $\text{\rm {Sol}}(P)$.

\begin{proposition}[Existence and uniqueness]\label{P3.2}
Suppose that $(X,\|\cdot\|)$ is a reflexive Banach space and
$f_{\vertiii{\cdot}}(u)\to\infty$ as $\|u\|\to\infty$.
Then $\text{\rm {Sol}}(P,\vertiii{\cdot})$ is a nonempty, closed and convex set. 
Moreover, if  $f_{\vertiii{\cdot}}$ is strictly convex, then the solution set is a singleton.
\end{proposition}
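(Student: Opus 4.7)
The plan is to establish all four assertions (existence, closedness, convexity, uniqueness) by treating $f_{\vertiii{\cdot}}$ as a convex continuous coercive function on a reflexive Banach space and invoking the direct method of the calculus of variations.

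First I would verify that $f_{\vertiii{\cdot}}$ is convex and continuous on $X$. The map $u\mapsto (u-v_1,\ldots,u-v_n)$ is continuous and affine from $X$ into $X^n$, and the norm $\vertiii{\cdot}$ is continuous and convex on $X^n$; hence $f_{\vertiii{\cdot}}$, being their composition, is convex and continuous, and in particular lower semicontinuous in the strong topology.

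For existence I would use the direct method. Pick a minimizing sequence $(u_k)\subset X$ with $f_{\vertiii{\cdot}}(u_k)\downarrow m:=\inf_{u\in X}f_{\vertiii{\cdot}}(u)$. The coercivity assumption $f_{\vertiii{\cdot}}(u)\to\infty$ as $\|u\|\to\infty$ forces $(u_k)$ to be bounded in $X$. Since $X$ is reflexive, some subsequence $(u_{k_j})$ converges weakly to a point $\bar u\in X$. Because $f_{\vertiii{\cdot}}$ is convex and strongly continuous, its sublevel sets are closed and convex, hence weakly closed by Mazur's theorem, so $f_{\vertiii{\cdot}}$ is weakly sequentially lower semicontinuous. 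Therefore
\begin{equation*}
f_{\vertiii{\cdot}}(\bar u)\le \liminf_{j\to\infty} f_{\vertiii{\cdot}}(u_{k_j})=m,
\end{equation*}
which shows $\bar u\in\text{\rm Sol}(P,\vertiii{\cdot})$ and in particular $m>-\infty$.

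For closedness and convexity of the solution set, I would simply write
\begin{equation*}
\text{\rm Sol}(P,\vertiii{\cdot})=\{u\in X\mid f_{\vertiii{\cdot}}(u)\le m\},
\end{equation*}
which is a sublevel set of a continuous convex function and hence both closed and convex. For uniqueness under strict convexity, suppose $u_1,u_2\in\text{\rm Sol}(P,\vertiii{\cdot})$ with $u_1\ne u_2$. Strict convexity of $f_{\vertiii{\cdot}}$ gives
\begin{equation*}
f_{\vertiii{\cdot}}\!\left(\tfrac{u_1+u_2}{2}\right)<\tfrac{1}{2}f_{\vertiii{\cdot}}(u_1)+\tfrac{1}{2}f_{\vertiii{\cdot}}(u_2)=m,
\end{equation*}
contradicting the definition of $m$; hence $u_1=u_2$. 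No step here is genuinely difficult, but the one requiring care is the passage from strong continuity to weak lower semicontinuity, which is exactly where reflexivity enters through the Banach--Alaoglu/Eberlein--\v{S}mulian framework that guarantees a weakly convergent subsequence of the bounded minimizing sequence.
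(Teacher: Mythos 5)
Your proof is correct and follows essentially the same route as the paper: the paper likewise observes that $f_{\vertiii{\cdot}}$ is the composition of the affine map $u\mapsto(u-v_1,\ldots,u-v_n)$ with the norm $\vertiii{\cdot}$, and then simply cites \cite[Corollary~3.23]{Bre11} for existence, which is precisely the direct-method argument (coercivity, reflexivity, weak lower semicontinuity via Mazur) that you write out in full. The closedness, convexity and uniqueness arguments are the standard facts the paper invokes without detail, so your proposal is a correctly expanded version of the paper's proof.
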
	

\begin{proof}
Observe that $f_{\vertiii{\cdot}}$ is a composition of the continuous {affine} mapping $L: X\to X^n$ given by
\begin{gather}\label{A}
L(u):=(u-v_1,\ldots,u-v_n)\;\;\text{for all}\;\;u\in X,
\end{gather}	
and the norm function
\begin{gather}\label{phi}
\phi(x):=\vertiii{x} \;\;\text{for all}\;\;x\in X^n.
\end{gather}	
Hence, $f_{\vertiii{\cdot}}$ is convex and continuous.
By \cite[Corollary~3.23]{Bre11}, $\text{\rm {Sol}}(P,\vertiii{\cdot})$ is nonempty. 
Its convexity and closedness are due to
 the convexity and continuity of $f_{\vertiii{\cdot}}$.
The uniqueness under the strict convexity is a standard fact of convex analysis.
\end{proof}	

\begin{remark}\label{P4.2}
\begin{enumerate}
\item		
Problem~\eqref{P} serves as a unified framework encompassing several classical optimization problems.
In Sections~\ref{S4}–\ref{S6}, we study three important cases of problem~\eqref{P}, where  $(X,\|\cdot\|)$ is a normed space, and $\vertiii{\cdot}$ is  the sum, max and $p$-norms.
\item
A closely related model is studied in \cite{Ves97}.
The author considers
 the objective function of the form:
\begin{gather}\label{P4.2-1}
f(u):=\varphi(\|u-v_1\|,\ldots,\|u-v_n\|)\;\;
\text{for all}\;\;u\in X,
\end{gather}	 
where $(X,\|\cdot\|)$ is a Banach space and  $\varphi: \R^n_+\to\R_+$ is a continuous, monotone, convex and coercive function.
This study, among other things, is based on the method of finitely intersecting balls \cite[Theorem~2.7]{Ves97} which is  further examined in \cite{BanRao00}. 
In particular, the author proves the nonemptiness of the solution set under the assumption that $X$ is norm-one complemented in its bidual \(X^{**}\) \cite[Proposition~2.2]{Ves97}. 
It is worth emphasizing that the class of Banach spaces that are norm-one complemented in their bidual strictly contains the class of reflexive spaces \cite[Remark~2.3]{Ves97}. 
Further results concerning the uniqueness and compactness of the solution set for this model can be found in \cite[Theorems~3.2 \&~3.4]{Ves97}.
\end{enumerate}
\end{remark}	

The next statement gives some sufficient conditions for the assumptions in Proposition~\ref{P3.2}.
\begin{proposition}\label{P4.3}
Suppose that $(X,\|\cdot\|)$ is a normed space, and $\psi\in \pmb{\Psi}_n$. 
The following assertions hold true.
\begin{enumerate}
\item\label{P4.3-1}
$f_{\vertiii{\cdot}_\psi}(u)\to\infty$ as $\|u\|\to\infty$.
\item 
If $\psi$ and $\|\cdot\|$ are strictly convex, then $f_{\vertiii{\cdot}_{\psi}}$ is strictly convex.
\item\label{P4.3-3}
If $\|\cdot\|$ is strictly convex, $v_1,\ldots,v_n$ are not collinear, and $\psi:=\psi_1$ is given by \eqref{E3.1-1}, then  $f_{\vertiii{\cdot}_{\psi}}$ is strictly convex.
\item \label{P4.3-4}
If \rm{dim}\;$X<\infty$, then $\text{\rm {Sol}}(P,\vertiii{\cdot}_\psi)$  is compact.
\end{enumerate}	
\end{proposition}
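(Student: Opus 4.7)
The plan is to treat the four assertions separately, exploiting that $f_{\vertiii{\cdot}_\psi}$ is the composition of the injective affine map $L:u\mapsto (u-v_1,\ldots,u-v_n)$ from~\eqref{A} with the norm $\vertiii{\cdot}_\psi$ on $X^n$.

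For~\eqref{P4.3-1}, I would sandwich the objective from below via Remark~\ref{R2.5}\eqref{R2.5-1}: the inequality $\vertiii{\cdot}_\psi\ge \vertiii{\cdot}_\infty$ yields $f_{\vertiii{\cdot}_\psi}(u)\ge \max_{i}\|u-v_i\|\ge \|u\|-\min_{i}\|v_i\|$ by the reverse triangle inequality, which tends to $\infty$ as $\|u\|\to\infty$. For assertion~(ii), I would invoke Theorem~\ref{T2.3}\eqref{T2.3-1} to obtain strict convexity of $\vertiii{\cdot}_\psi$, and then check that for distinct $u_1,u_2\in X$ the vectors $L(u_1),L(u_2)\in X^n$ are linearly independent: any relation $L(u_1)=\lambda L(u_2)$ forces $(1-\lambda)v_i=\lambda u_2-u_1$ for all $i$, so either $u_1=u_2$ (when $\lambda=1$) or all $v_i$ coincide (when $\lambda\neq 1$), both excluded. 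Strict convexity of the norm on linearly independent arguments then delivers the strict midpoint inequality for $f_{\vertiii{\cdot}_\psi}$.

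Assertion~\eqref{P4.3-3} is the main obstacle, since $\psi_1\equiv 1$ is not strictly convex and Theorem~\ref{T2.3}\eqref{T2.3-1} is unavailable; the argument must use the non-collinearity hypothesis on the $v_i$'s directly. Here the objective takes the explicit form $f_{\vertiii{\cdot}_{\psi_1}}(u)=\sum_{i=1}^n\|u-v_i\|$. Assume strict convexity fails at some distinct $u_1,u_2\in X$. Summing the midpoint triangle inequalities $\bigl\|\tfrac12((u_1-v_i)+(u_2-v_i))\bigr\|\le \tfrac12(\|u_1-v_i\|+\|u_2-v_i\|)$ over $i$, the assumed global equality forces equality in every term. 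Strict convexity of $\|\cdot\|$ then forces, for each $i$, the pair $u_1-v_i,\,u_2-v_i$ to be nonnegatively parallel (or one of them to vanish), which I will translate into: $v_i$ lies on the affine line through $u_1$ and $u_2$. As this holds for every $i$, the $v_i$'s are collinear, contradicting the hypothesis.

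Finally, assertion~\eqref{P4.3-4} follows in a few lines: a finite-dimensional normed space is a reflexive Banach space, so Proposition~\ref{P3.2} combined with the coercivity established in~\eqref{P4.3-1} gives a nonempty, closed, convex solution set whose sublevel sets are bounded; bounded closed sets in finite dimension are compact.
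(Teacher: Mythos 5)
Your proposal is correct and follows essentially the same route as the paper: the lower bound $\vertiii{\cdot}_\psi\ge\vertiii{\cdot}_\infty$ for coercivity, Theorem~\ref{T2.3}\;\eqref{T2.3-1} for assertion (ii), the equality-case analysis of the triangle inequality forcing each $v_i$ onto the line through the two test points for \eqref{P4.3-3}, and closedness plus a bound on minimizers for \eqref{P4.3-4}. Your explicit check in (ii) that $L(u_1)$ and $L(u_2)$ cannot be proportional for distinct $u_1,u_2$ is a detail the paper leaves implicit, but it is the same argument.
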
	

\begin{proof}
\begin{enumerate}
\item 
By Remark~\ref{R2.5}\;\eqref{R2.5-1},
\begin{gather*}
f_{\vertiii{\cdot}_\psi}(u)=\vertiii{(u-v_1,\ldots,u-v_n)}_\psi \ge \max_{1\le i\le n}\|u-v_i\|\ge \|u\|- \max_{1\le i\le n}\|v_i\|
\end{gather*}	
for all $u\in X$.
Thus $f_{\vertiii{\cdot}_\psi}(u)\to\infty$ as $\|u\|\to\infty$.
\item 
Suppose that $\psi$ and $\|\cdot\|$ are strictly convex.
By Theorem~\ref{T2.3}\;\eqref{T2.3-1}, the norm $\vertiii{\cdot}_\psi$ is strictly convex.
Then $f_{\vertiii{\cdot}_{\psi}}$ is also strictly convex since it is a composition of the norm  $\vertiii{\cdot}_\psi$ and the affine map  \eqref{A}.
\item 
We have $f_{\vertiii{\cdot}_{\psi}}(u)=\sum_{i=1}^{n}\|u-v_i\|$ for all $u\in X$.
Suppose by contradiction that there exist  $\bar u,\bar v\in X$ with $\bar u\ne \bar v$ and a scalar $\lambda\in(0,1)$ such that
\begin{gather*}
f_{\vertiii{\cdot}_{\psi}}(\lambda\bar u+(1-\lambda)\bar v)=\lambda f_{\vertiii{\cdot}_{\psi}}(\bar u)+(1-\lambda)f_{\vertiii{\cdot}_{\psi}}(\bar v).
\end{gather*}
Thus 
\begin{gather}\label{P4.3-6}
\|\lambda (\bar u-v_i)+(1-\lambda)(\bar v-v_i)\|=\|\lambda (\bar u-v_i)\|+\|(1-\lambda)(\bar v-v_i)\|
\end{gather}	
for all  $i=1,\ldots,n$.
Let  $L(\bar u,\bar v):=\{t\bar u+(1-t)\bar v\mid t\in\R\}$ be the  affine line passing through $\bar u$ and $\bar v$, and
$i\in\{1,\ldots,n\}$. 
If either $v_i=\bar u$ or $v_i=\bar v$, then $v_i\in L(\bar u,\bar v)$.
Suppose that $v_i\ne \bar u$ and $v_i\ne \bar v$.
By \eqref{P4.3-6}, there exists a scalar $\xi>0$ such that
$\lambda (\bar u-v_i)=\xi (1-\lambda)(\bar v-v_i)$. 
Then $\bar u-v_i=\gamma_i (\bar v-v_i)$ with $\gamma_i:=\frac{\xi (1-\lambda)}{\lambda}>0$.
Note that $\gamma_i\ne 1$ since $\bar u\ne\bar v$.
We have
\begin{gather*}
v_i=\frac{1}{1-\gamma_i}\cdot\bar u-\frac{\gamma_i}{1-\gamma_i}\cdot\bar v\in L(\bar u,\bar v).
\end{gather*}	
Thus, $v_i\in  L(\bar u,\bar v)$ for all $i=1,\ldots,n$, which contradicts the non-collinearity assumption.
\item
Suppose that \rm{dim}\;$X<\infty$.
By Proposition~\ref{P3.2}, the solution set is closed.
We now prove that it is bounded.
By Remark~\ref{R2.5}\;\eqref{R2.5-1},
\begin{align*}
f_{\vertiii{\cdot}_{\psi}}(u)
\ge \max_{1\le i\le n}\|u-v_i\|
\ge \dfrac{1}{n}\sum_{i=1}^{n}\|u-v_i\|
\ge \|u\|-\dfrac{1}{n}\sum_{i=1}^{n}\|v_i\|
\end{align*}	
for all $u\in X$.
If $\|u\|> \left(1+\frac{1}{n}\right)\sum_{i=1}^{n}\|v_i\|$, then
\begin{gather*}
f_{\vertiii{\cdot}_{\psi}}(u)>\sum_{i=1}^{n}\|v_i\|\ge \vertiii{(v_1,\ldots,v_n)}=f_{\vertiii{\cdot}_{\psi}}(0),
\end{gather*}
and consequently, $u$ is not a minimizer of $f_{\vertiii{\cdot}_{\psi}}$.
Hence, if $u\in \text{\rm {Sol}}(P,\vertiii{\cdot}_\psi)$, then $\|u\|\le \left(1+\frac{1}{n}\right)\sum_{i=1}^{n}\|v_i\|$.
\end{enumerate}		
The proof is complete.
\end{proof}	

\begin{remark}
\begin{enumerate}
\item		
When $\psi:=\psi_1$ is given by \eqref{ppsi}, Proposition~\ref{P4.3}\;\eqref{P4.3-4} recaptures \cite[Proposition~3.2]{MarSwaWei02}.
\item 
If $X$ is a finite dimensional space equipped with the Euclidean norm, then assertion \eqref{P4.3-3} of Proposition~\ref{P4.3} recaptures \cite[Proposition~4.30]{MorNam14}.
\end{enumerate}
\end{remark}	

The following result provides a formula for the subdifferential of the objective function of problem~\eqref{P}.
\begin{proposition}\label{P2.4}
Let $X$ be a vector space, $\vertiii{\cdot}$ be a norm on $X^n$, and $\bar u\in X$.
Then
\begin{align*}
\partial f_{\vertiii{\cdot}}(\bar u)=\Big\{\sum_{i=1}^{n}x^*_i\mid \vertiii{(x^*_1,\ldots,x^*_n)}^*=1,\;
\sum_{i=1}^{n}\langle x^*_i,\bar u -v_i\rangle=\vertiii{(\bar u-v_1,\ldots,\bar u-v_n)}\Big\}.
\end{align*}
\end{proposition}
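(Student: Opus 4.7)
The plan is to exploit the decomposition already noted in the proof of Proposition~\ref{P3.2}: writing $L$ for the affine mapping in \eqref{A} and $\phi$ for the norm function in \eqref{phi}, we have $f_{\vertiii{\cdot}}=\phi\circ L$. The task therefore reduces to computing the subdifferential of a finite continuous convex function composed with an affine mapping and then specializing to the case of a norm.

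First I would split $L(u)=L_0u+c$, where $L_0:X\to X^n$ is the linear map $u\mapsto(u,\ldots,u)$ and $c:=-(v_1,\ldots,v_n)$. A direct computation using the pairing on $X^n$ shows that the adjoint is given by $L_0^*(x_1^*,\ldots,x_n^*)=\sum_{i=1}^n x_i^*$. Since $\phi$ is finite and continuous on all of $X^n$, no constraint qualification is needed, and the standard convex chain rule for composition with an affine mapping yields $\partial f_{\vertiii{\cdot}}(\bar u)=L_0^*\bigl(\partial\phi(L(\bar u))\bigr)=\bigl\{\sum_{i=1}^n x_i^*:(x_1^*,\ldots,x_n^*)\in\partial\phi(L(\bar u))\bigr\}$.

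It then remains to apply the norm subdifferential formula \eqref{sn} to $\vertiii{\cdot}$ on $X^n$. Since the vectors $v_1,\ldots,v_n$ are assumed distinct at the start of Section~\ref{S3}, we have $L(\bar u)=(\bar u-v_1,\ldots,\bar u-v_n)\neq 0_{X^n}$ for every $\bar u\in X$, so the nonzero branch of \eqref{sn} applies and gives $\partial\phi(L(\bar u))=\{(x_1^*,\ldots,x_n^*):\vertiii{(x_1^*,\ldots,x_n^*)}^*=1,\ \sum_{i=1}^n\langle x_i^*,\bar u-v_i\rangle=\vertiii{L(\bar u)}\}$. Substituting this description into the chain-rule expression gives the claimed formula verbatim. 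The only potential subtlety, namely the qualification condition for the chain rule, is automatic here because $\phi$ is everywhere continuous; thus I do not anticipate any real obstacle, and the proof is essentially a packaging of the norm subdifferential together with the affine chain rule.
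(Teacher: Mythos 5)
Your proposal is correct and follows essentially the same route as the paper: write $f_{\vertiii{\cdot}}$ as the norm $\phi$ composed with the affine map $L$, apply the convex chain rule (valid without qualification since $\phi$ is everywhere continuous) to get $\partial f_{\vertiii{\cdot}}(\bar u)=L_0^{*}\partial\phi(L\bar u)$ with $L_0^{*}(x_1^*,\ldots,x_n^*)=\sum_{i=1}^n x_i^*$, and invoke the nonzero branch of \eqref{sn} because the $v_i$ are distinct. Your explicit split of $L$ into its linear part $L_0$ and the constant $c$ is slightly more careful than the paper's phrasing, but the argument is the same.
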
	

\begin{proof}
Since $v_1,\ldots,v_n$ are distinct, we have  $f_{\vertiii{\cdot}}(\bar u)>0$.
Recall that the convex function $f_{\vertiii{\cdot}}$ is a composition of the continuous affine mapping $L$ and the norm function $\phi$ given by \eqref{A} and \eqref{phi}, respectively.
The adjoint mapping $L^*:(X^*)^n\to X^*$ is given by
\begin{gather*}
(x^*_1,\ldots,x^*_n)\mapsto \sum_{i=1}^{n}x^*_i\;\;
\text{for all}\;\;(x^*_1,\ldots,x^*_n)\in(X^*)^n.
\end{gather*}	
By \eqref{sn},
\begin{gather*}
\partial \phi(x)=\left\{x^*\in (X^*)^n\mid \vertiii{x^*}^*=1,\;\langle x^*,x\rangle=\vertiii{x}\right\}
\;\;\text{for all}\;\;x\ne \red{0_{X^n}}.
\end{gather*}	
By \cite[Theorem~3.40]{Pen13}, we have
$\partial f_{\vertiii{\cdot}}(\bar u)=L^*\partial {\phi(L(\bar u))}$.
{This completes the proof.}
\end{proof}	

The next result establishes dual necessary and sufficient optimality conditions for problem~\eqref{P}.
\begin{theorem}\label{T3.5}
Let $X$ be a vector space, and $\vertiii{\cdot}$ be a norm on $X^n$.
A vector $\bar u\in \text{\rm Sol}(P,\vertiii{\cdot})$ if and only if there exist $x^*_1,\ldots,x^*_n\in X^*$ such that
\begin{gather}\label{P2.5-0}
\displaystyle \sum_{i=1}^n x_i^*=0,\;\;
\vertiii{(x^*_1,\ldots,x^*_n)}^*=1,\;\;
\displaystyle \sum_{i=1}^n \langle x_i^*,\bar u-v_i\rangle=\vertiii{(\bar u-v_1,\ldots,\bar u-v_n)}.
\end{gather}	
Moreover,
\begin{gather}\label{P2.5-1}
\text{\rm Sol}(P,\vertiii{\cdot})=\left\{u\in X\mid \sum_{i=1}^{n}\ang{x^*_i,u-v_i}=\vertiii{(u-v_1,\ldots,u-v_n)}\right\}.
\end{gather}
\end{theorem}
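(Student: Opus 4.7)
The plan is to deduce both assertions directly from the subdifferential formula in Proposition~\ref{P2.4} together with the observation that the constraint $\sum_{i=1}^n x_i^*=0$ makes the linear functional $u \mapsto \sum_{i=1}^n \langle x_i^*, u-v_i \rangle$ constant in $u$.

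For the characterization of optimality, I would begin with the standard convex-analytic fact that $\bar u$ minimizes the convex continuous function $f_{\vertiii{\cdot}}$ if and only if $0 \in \partial f_{\vertiii{\cdot}}(\bar u)$. Invoking Proposition~\ref{P2.4} applied at $\bar u$, this inclusion is equivalent to the existence of $x_1^*,\ldots,x_n^* \in X^*$ satisfying $\vertiii{(x_1^*,\ldots,x_n^*)}^*=1$, the attainment identity $\sum_{i=1}^n \langle x_i^*, \bar u-v_i\rangle = \vertiii{(\bar u-v_1,\ldots,\bar u-v_n)}$, and additionally $\sum_{i=1}^n x_i^*=0$ (this being the translation of $0 = L^*(x_1^*,\ldots,x_n^*)$). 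This is exactly the system~\eqref{P2.5-0}.

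For the formula~\eqref{P2.5-1}, the key observation is that once $\sum_{i=1}^n x_i^*=0$, we have for every $u\in X$
\begin{equation*}
\sum_{i=1}^n \langle x_i^*, u-v_i\rangle = \Big\langle \sum_{i=1}^n x_i^*, u\Big\rangle - \sum_{i=1}^n \langle x_i^*, v_i\rangle = -\sum_{i=1}^n \langle x_i^*, v_i\rangle,
\end{equation*}
which is independent of $u$ and in particular equals the value $\sum_{i=1}^n \langle x_i^*, \bar u-v_i\rangle = f_{\vertiii{\cdot}}(\bar u)$ obtained at the known optimum. On the other hand, since $\vertiii{(x_1^*,\ldots,x_n^*)}^*=1$, the duality inequality gives
\begin{equation*}
\sum_{i=1}^n \langle x_i^*, u-v_i\rangle \le \vertiii{(u-v_1,\ldots,u-v_n)} = f_{\vertiii{\cdot}}(u)
\end{equation*}
for every $u\in X$. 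Combining these two facts, $f_{\vertiii{\cdot}}(u) \ge f_{\vertiii{\cdot}}(\bar u)$ (which re-proves that $\bar u$ is optimal and that the minimum value is $-\sum_{i=1}^n\langle x_i^*,v_i\rangle$), and $u$ is a minimizer precisely when the duality inequality is tight, i.e.\ $\sum_{i=1}^n \langle x_i^*, u-v_i\rangle = \vertiii{(u-v_1,\ldots,u-v_n)}$. This gives both inclusions in~\eqref{P2.5-1}.

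There is no real obstacle here: the only thing one has to be slightly careful about is making sure the chain rule in Proposition~\ref{P2.4} is being used correctly so that the constraint $0 = L^*(x_1^*,\ldots,x_n^*) = \sum_{i=1}^n x_i^*$ is read off properly as the stationarity condition. Everything else is a one-line manipulation using the linear dependence of $\sum_i \langle x_i^*, u-v_i\rangle$ on $u$, which disappears due to $\sum_i x_i^* = 0$.
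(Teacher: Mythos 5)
Your proposal is correct and follows essentially the same route as the paper: both parts rest on the equivalence $\bar u\in \text{\rm Sol}(P,\vertiii{\cdot})\Leftrightarrow 0\in\partial f_{\vertiii{\cdot}}(\bar u)$ combined with Proposition~\ref{P2.4}, and on the fact that $\sum_{i=1}^n x_i^*=0$ makes $u\mapsto\sum_{i=1}^n\langle x_i^*,u-v_i\rangle$ constant. The only cosmetic difference is that you derive both inclusions of \eqref{P2.5-1} at once from the duality inequality $\sum_{i}\langle x_i^*,u-v_i\rangle\le\vertiii{(u-v_1,\ldots,u-v_n)}$, whereas the paper proves one inclusion by reapplying Proposition~\ref{P2.4} and the other by the same constancy computation; the substance is identical.
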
	

\begin{proof}
We have $\bar u\in \text{\rm Sol}(P,\vertiii{\cdot})$ if and only if $0\in \partial f_{\vertiii{\cdot}}(\bar u)$.
{Then the} characterization \eqref{P2.5-0} is a consequence of 
Proposition~\ref{P2.4}.

Let $\text{\rm R}$ denote the set in the right-hand side of \eqref{P2.5-1}.
Suppose that $u\in\text{\rm R}$.
Then
\begin{gather*}
\sum_{i=1}^{n}\ang{x^*_i,u-v_i}=\vertiii{(u-v_1,\ldots,u-v_n)}.	
\end{gather*}	
By Proposition~\ref{P2.4}, we have $0\in\partial f_{\vertiii{\cdot}}(u)$.
Thus, $u\in \text{\rm Sol}(P,\vertiii{\cdot})$, and consequently,
 $\text{\rm R}\subset\text{\rm Sol}(P,\vertiii{\cdot})$.
Suppose that $u\in\text{\rm Sol}(P,\vertiii{\cdot})$. Then
\begin{align*}
\sum_{i=1}^n\ang{x^*_i,u-v_i}	
&=\sum_{i=1}^n\ang{x^*_i,u-\bar u}+
\sum_{i=1}^n\ang{x^*_i,\bar u-v_i}\\
&=\ang{\sum_{i=1}^{n}x^*_i,u-\bar u}+\vertiii{(\bar u -v_1,\ldots,\bar u-v_n)}
=\vertiii{(u-v_1,\ldots,u-v_n)},
\end{align*}	
where the last equality follows from the fact that $\sum_{i=1}^n x_i^*=0$ and $f_{\vertiii{\cdot}}(\bar u)=f_{\vertiii{\cdot}}(u)$.
Thus $u\in \text{\rm R}$, and consequently, $\text{\rm Sol}(P,\vertiii{\cdot})\subset \text{\rm R}$.
\end{proof}	

\begin{remark}
In view of  \eqref{P2.5-1}, a complete description of the solution set can be obtained if we know a solution $\bar u$
and its corresponding dual vectors $x^*_1,\ldots,x^*_n$ given by \eqref{P2.5-0}.
\end{remark}

The following example illustrates  Theorem~\ref{T3.5} for the case $n=2$.
\begin{example}\label{E3.7}
Let $(X,\|\cdot\|)$ be a normed space, $v_1,v_2\in X$ with $v_1\ne v_2$, and $\bar u:=\frac{v_1+v_2}{2}$.
Suppose that $\psi\in \pmb{\Psi}_2$ is permutation-symmetric, $\vertiii{\cdot}_\psi$ and $\psi^*$ are given by \eqref{T2.2-1} and \eqref{psi1}, respectively.
Then there exist $x^*_1,x^*_2\in X^*$ such that
\begin{gather}\label{E3.7-4}
x_1^*+x^*_2=0,\;\;
\vertiii{(x^*_1,x^*_2)}_{\psi^*}=1,\;\;
\ang{x_1^*,\bar u-v_1}+\ang{x_2^*,\bar u-v_2}=\vertiii{(\bar u-v_1,\bar u-v_2)}_\psi.
\end{gather}	
By Theorem~\ref{T3.5}, $\bar u\in \text{\rm Sol}(P,\vertiii{\cdot}_\psi)$ and
\begin{gather*}
\text{\rm Sol}(P,\vertiii{\cdot}_\psi)=\left\{u\in X\mid \ang{x^*_1,u-v_1}+\ang{x^*_2,u-v_2}=\vertiii{(u-v_1,u-v_2)}_\psi\right\}.
\end{gather*}
The optimal value is
$f_{\min}=\frac{1}{2}\cdot\vertiii{(v_1-v_2,v_2-v_1)}_\psi
=\|v_1-v_2\|\cdot\psi(1/2,1/2).$
\end{example}	

\begin{proof}
The objective function is given by
\begin{gather*}
f_{\vertiii{\cdot}_{\psi}}(u):=\vertiii{(u-v_1,u-v_2)}_\psi\;\;\text{for all}\;\;u\in X.
\end{gather*}	
We are going to show that $0\in\partial f_{\vertiii{\cdot}_{\psi}}(\bar u)$.
Let $x_i:=\bar u-v_i$ $(i=1,2)$ and $\mathrm{m}:=\psi(1/2,1/2)>0$.
Then $x_1+x_2=0$.
By \eqref{T2.2-1}, we have
$\|(x_1,x_2)\|_\psi=\|(x_1,-x_1)\|_\psi=2\mathrm{m}\|x_1\|.$
By the Hahn-Banach Theorem \cite[Corollary~1.3]{Bre11}, there exists a vector $x^*\in X^*$ such that $\|x^*\|^*=1$ and $\langle x^*,x_1\rangle=\|x_1\|$.
Let $x^*_1:=\mathrm{m} x^*$ and $x^*_2:=-\mathrm{m} x^*$.
Then $x^*_1+x^*_2=0$, and
\begin{gather*}
\langle(x^*_1,x^*_2),(x_1,x_2)\rangle
=\mathrm{m}\ang{x^*,x_1}+\mathrm{m}\ang{-x^*,-x_1}
=2\mathrm{m}\langle x^*,x_1\rangle=2\mathrm{m}\|x_1\|=\|(x_1,x_2)\|_\psi.
\end{gather*}	
By Proposition~\ref{P2.2}\;\eqref{P2.2-2}, we have $\min\limits_{(t_1,t_2)\in\Omega_2}\psi(t_1,t_2)=\mathrm{m}$.
By Theorem~\ref{T2.3}\;\eqref{T2.3-3} and Remark~\ref{R2.5}\;\eqref{R2.5-2},
\begin{align*}
\vertiii{(x^*_1,x^*_2)}^*_{\psi}=
\vertiii{(x^*_1,x^*_2)}_{\psi^*}
=\max_{(t_1,t_2)\in\Omega_2}\dfrac{t_1\cdot\mathrm{m} +t_2\cdot\mathrm{m}}{\psi (t_1,t_2)}=\dfrac{\mathrm{m}}{\min\limits_{(t_1,t_2)\in\Omega_2}\psi(t_1,t_2)}=1.
\end{align*}	
Thus, condition \eqref{E3.7-4} is satisfied.
The conclusion is a direct consequence of Theorem~\ref{T3.5}.
\end{proof}	

\begin{proposition}\label{C3.8}
Let $X$ be a Hilbert space, and $\psi\in \pmb{\Psi}_n$.
Then 
\begin{gather*}
\emptyset\ne \text{\rm Sol}(P,\vertiii{\cdot}_\psi)\subset \text{\rm conv}\{v_1,\ldots,v_n\}.
\end{gather*}	
\end{proposition}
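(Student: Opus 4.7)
The plan is to prove nonemptiness and the convex-hull inclusion separately, both by combining the dual optimality conditions of Theorem~\ref{T3.5} with the Cauchy--Schwarz equality characterization available in Hilbert spaces.

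For nonemptiness, I would invoke Proposition~\ref{P4.3}\;\eqref{P4.3-1} to obtain the coercivity $f_{\vertiii{\cdot}_\psi}(u)\to\infty$ as $\|u\|\to\infty$. Since every Hilbert space is a reflexive Banach space, Proposition~\ref{P3.2} applies and gives $\text{\rm Sol}(P,\vertiii{\cdot}_\psi)\ne\emptyset$ at once.

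For the inclusion, pick $\bar u\in\text{\rm Sol}(P,\vertiii{\cdot}_\psi)$ and identify $X^{*}$ with $X$ via the Riesz representation. By Theorem~\ref{T3.5}, there exist $x_1^*,\ldots,x_n^*\in X$ with $\sum_i x_i^*=0$, $\vertiii{(x_1^*,\ldots,x_n^*)}_{\psi}^{*}=1$, and
\[
\sum_{i=1}^n\langle x_i^*,\bar u-v_i\rangle=\vertiii{(\bar u-v_1,\ldots,\bar u-v_n)}_\psi.
\]
Proposition~\ref{P3.5+}\;\eqref{P3.5-2} then upgrades this global equality to the componentwise equalities $\langle x_i^*,\bar u-v_i\rangle=\|x_i^*\|\cdot\|\bar u-v_i\|$ for each $i$. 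In a Hilbert space, equality in Cauchy--Schwarz forces positive collinearity: for every $i$, either $\bar u=v_i$ (in which case $\bar u\in\{v_1,\ldots,v_n\}\subset\text{\rm conv}\{v_1,\ldots,v_n\}$ and we are done), or there exists $\mu_i\ge 0$ with $x_i^*=\mu_i(\bar u-v_i)$; the case $x_i^*=0$ is absorbed by setting $\mu_i=0$. One may also phrase this appeal directly through Lemma~\ref{L1.1}\;\eqref{L1.1-1}.

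Substituting into the balance constraint yields $0=\sum_i x_i^*=\bigl(\sum_i\mu_i\bigr)\bar u-\sum_i\mu_i v_i$. The scalar $\sum_i\mu_i$ cannot vanish: otherwise every $\mu_i=0$, hence every $x_i^*=0$, contradicting the normalization $\vertiii{(x_1^*,\ldots,x_n^*)}_\psi^{*}=1$. Dividing by $\sum_j\mu_j$ and setting $\alpha_i:=\mu_i/\sum_j\mu_j$ produces nonnegative coefficients summing to $1$ with $\bar u=\sum_i\alpha_i v_i$, so $\bar u\in\text{\rm conv}\{v_1,\ldots,v_n\}$. The only delicate point — really a matter of bookkeeping rather than mathematics — is making sure that the cases $x_i^*=0$ and $\bar u=v_i$ are handled without breaking the unified formula $x_i^*=\mu_i(\bar u-v_i)$, and the normalization of $x^{*}$ is precisely what prevents all $\mu_i$ from vanishing simultaneously.
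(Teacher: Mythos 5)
Your proposal is correct and follows essentially the same route as the paper: coercivity plus reflexivity for nonemptiness, then Theorem~\ref{T3.5}, Proposition~\ref{P3.5+}\;\eqref{P3.5-2}, and the Riesz/Cauchy--Schwarz equality case to write each dual vector as a nonnegative multiple of $\bar u-v_i$ and solve the balance equation $\sum_i x_i^*=0$ for $\bar u$. Your explicit handling of the degenerate cases $x_i^*=0$ and $\bar u=v_i$, and of why $\sum_i\mu_i>0$, is if anything slightly more careful than the paper's own write-up.
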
	

\begin{proof}
Every Hilbert space is a reflexive Banach space. 
By Propositions~\ref{P3.2} and \ref{P4.3}\;\eqref{P4.3-1}, the solution set is nonempty.
Let $u\in \text{\rm Sol}(P,\vertiii{\cdot}_\psi)$.
By Theorem~\ref{T3.5}, there exist $x^*_1,\ldots,x^*_n\in X^*$ such that
\begin{gather*}
\displaystyle \sum_{i=1}^n x_i^*=0,\;\;
\vertiii{(x^*_1,\ldots,x^*_n)}_{\psi^*}=1,\;\;
\displaystyle \sum_{i=1}^n \langle x_i^*,u-v_i\rangle=\vertiii{(u-v_1,\ldots,u-v_n)}_{\psi}.
\end{gather*}	
In view of the last equality and Proposition~\ref{P3.5+}\;\eqref{P3.5-2}, 
\begin{gather*}
\ang{x^*_i,u-v_i}=\|x^*_i\|^*\cdot\|u-v_i\|\;\;(i=1,\ldots,n).
\end{gather*}
By the Riesz representation theorem, there exist unique 
vectors $\bx_1,\ldots,\bx_n\in X$ such that
\begin{gather*}
\sum_{i=1}^{n}\bx_i=0,\;\;\|x^*_i\|^*=\|\bx_i\|,\;\;\langle \bx_i,u-v_i\rangle =\|\bx_i\|\cdot\|u-v_i\|\;\;(i=1,\ldots,n).
\end{gather*}	
Thus, there exist scalars $\lambda_i\ge 0$ such that $\lambda:=\sum_{i=1}^{n}\lambda_i>0$ and $\bx_i=\lambda_i(u-v_i)$ $(i=1,\ldots,n)$.
Then $0=\sum_{i=1}^{n}\bx_i=\lambda u-\sum_{i=1}^{n}\lambda_iv_i$, and consequently,
$$u=\frac{\lambda_1}{\lambda}v_1+\ldots+\frac{\lambda_n}{\lambda}v_n\in \text{\rm conv}\{v_1,\ldots,v_n\}.$$
The proof is complete.
\end{proof}	

\begin{remark}
\begin{enumerate}	
\item		
The conclusion of  Proposition~\ref{C3.8} may not be true if $X$ is not a Hilbert space.
Several examples and counterexamples are provided in Sections~\ref{S4}-\ref{S6}.
\item 
Proposition~\ref{C3.8} provides a necessary condition for a norm on a vector space to be characterized by an inner product.
In some particular cases, this necessary condition is also sufficient \cite{MenPak05}. 
\end{enumerate}
\end{remark}

\if{
The next example illustrates the result in Corollary~\ref{C3.8}.
\begin{example}
Let $X:=\R^2$ be equipped with the Euclidean norm $\|\cdot\|$, and $v_1,v_2,v_3$
be three distinct points. 
Consider the classical Fermat–Weber problem, i.e.,
\begin{gather*}
{\rm{minimize }}\;\;f(u):=\|v_1-u\|+\|v_2-u\|+\|v_3-u\|\;\;\text{for all}\;\;
u\in \R^2.
\end{gather*}
If none of the angles of the triangle formed by $v_1,v_2,v_3$ is greater than or equal to $120^\circ$, the unique solution lies inside the triangle. 
Otherwise, if one angle is $120^\circ$ or larger, the solution coincides with the vertex at that angle.
In either case, the solution is unique and lies within the convex hull of the three points.
\end{example}
}\fi

\section{Fermat-Torricelli Problem}\label{S4}
Let $(X,\|\cdot\|)$ be a normed space.
We study a particular case of problem \eqref{P} with $\vertiii{\cdot}:=\vertiii{\cdot}_1$.
The objective function is of the form:
\begin{gather}\label{4.1}
f(u)=\|u-v_1\|+\ldots+\|u-v_n\|\;\;\text{for all}\;\;u\in X.
\end{gather}

The following statement provides optimality conditions and a formula for the solution set of the Fermat–Torricelli problem.
\begin{theorem}\label{T2.7}
A vector $\bar u\in \text{\rm Sol}(P,\vertiii{\cdot}_{1})$ if and only if there exist $x^*_1,\ldots,x^*_n\in X^*$ such that
\begin{gather}\notag
\sum_{i=1}^{n}x^*_i=0,\;\;\max_{1\le i\le n}\|x^*_i\|^*=1,
\\\label{T3.9-1}
\ang{x^*_i,\bar u-v_i}=\|\bar u-v_i\|,\;\;(\|x^*_i\|^*-1)\cdot\|\bar u-v_i\|=0\;\;(i=1,\ldots,n).
\end{gather}
Moreover, if $\bar u\notin \{v_1,\ldots,v_n\}$, then
\begin{gather}\label{T2.7-2}
\text{\rm Sol}(P,\vertiii{\cdot}_{1})=\bigcap_{i=1}^n {\text{\rm \textbf{A}}_{\|\cdot\|}(v_i,x_i^*)},
\end{gather}
where 
\begin{gather}\label{T2.7-30}
\text{\rm \textbf{A}}_{\|\cdot\|}(v_i,x_i^*):=v_i+{\text{\rm T}}_{\|\cdot\|}(x^*_i)=	
\left\{u\in X\mid 
\ang{x^*_i,u-v_i}=\|x^*_i\|^*\cdot \|u-v_i\| \right\}.
\end{gather}	
\end{theorem}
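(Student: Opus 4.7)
The plan is to reduce everything to the general Theorem~\ref{T3.5} by unpacking the sum norm and its dual. With $\vertiii{\cdot}=\vertiii{\cdot}_1=\vertiii{\cdot}_{\psi_1}$, Remark~\ref{R2.5} (or equation~\eqref{dnorm}) gives $\vertiii{(x^*_1,\ldots,x^*_n)}^*_1=\max_{1\le i\le n}\|x^*_i\|^*$, so Theorem~\ref{T3.5} provides $x^*_1,\ldots,x^*_n\in X^*$ characterizing $\bar u\in\text{\rm Sol}(P,\vertiii{\cdot}_1)$ via $\sum_i x^*_i=0$, $\max_i\|x^*_i\|^*=1$, and
\[
\sum_{i=1}^n\ang{x^*_i,\bar u-v_i}=\sum_{i=1}^n\|\bar u-v_i\|.
\]
I would then argue that this scalar equality is equivalent to termwise equality $\ang{x^*_i,\bar u-v_i}=\|\bar u-v_i\|$. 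The forward direction uses the elementary chain of estimates $\ang{x^*_i,\bar u-v_i}\le\|x^*_i\|^*\cdot\|\bar u-v_i\|\le\|\bar u-v_i\|$, valid since $\|x^*_i\|^*\le 1$; if any inequality were strict in some coordinate, the sums could not agree. (Alternatively, one may invoke Proposition~\ref{P3.5+}\;\eqref{P3.6-2} directly, which delivers exactly conditions \eqref{P2.6-2.2} and \eqref{P2.6-3.1} specialized to the normalization $\max_i\|x^*_i\|^*=1$.)

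Next, I would observe that the termwise equality $\ang{x^*_i,\bar u-v_i}=\|\bar u-v_i\|$ forces $(\|x^*_i\|^*-1)\|\bar u-v_i\|=0$: if $\bar u=v_i$ this is automatic, and otherwise $\|x^*_i\|^*\ge \ang{x^*_i,\bar u-v_i}/\|\bar u-v_i\|=1$ combined with $\|x^*_i\|^*\le 1$ yields $\|x^*_i\|^*=1$. This establishes \eqref{T3.9-1}. The converse is immediate: conditions \eqref{T3.9-1} imply $\ang{x^*_i,\bar u-v_i}=\|\bar u-v_i\|$, whose summation recovers the optimality certificate of Theorem~\ref{T3.5}.

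For the solution set formula, I would apply \eqref{P2.5-1} from Theorem~\ref{T3.5} and rerun the same termwise-equality argument with $u$ in place of $\bar u$: since $\|x^*_i\|^*\le 1$, the identity $\sum_i\ang{x^*_i,u-v_i}=\sum_i\|u-v_i\|$ is equivalent to $\ang{x^*_i,u-v_i}=\|u-v_i\|$ for each $i$. The assumption $\bar u\notin\{v_1,\ldots,v_n\}$ is used here in exactly one place: it makes every $\|\bar u-v_i\|$ strictly positive, so by \eqref{T3.9-1} one has $\|x^*_i\|^*=1$ for all $i$. Under this normalization, $\ang{x^*_i,u-v_i}=\|u-v_i\|$ rewrites as $\ang{x^*_i,u-v_i}=\|x^*_i\|^*\cdot\|u-v_i\|$, which is precisely the defining condition of $\text{\rm \textbf{A}}_{\|\cdot\|}(v_i,x^*_i)$ in \eqref{T2.7-30}. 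Intersecting over $i$ yields \eqref{T2.7-2}.

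I expect no genuine obstacle beyond bookkeeping; the only subtlety is recognizing that the hypothesis $\bar u\notin\{v_1,\ldots,v_n\}$ is needed precisely to upgrade $\max_i\|x^*_i\|^*=1$ to $\|x^*_i\|^*=1$ for every $i$, which is what allows the clean ``$v_i+\text{\rm T}_{\|\cdot\|}(x^*_i)$'' representation. Without that hypothesis, some $x^*_i$ could have dual norm strictly less than $1$ and the corresponding set $\textbf{A}_{\|\cdot\|}(v_i,x^*_i)$ would no longer coincide with the locus $\{u\mid\ang{x^*_i,u-v_i}=\|u-v_i\|\}$ that emerges from Theorem~\ref{T3.5}.
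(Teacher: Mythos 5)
Your proposal is correct and follows essentially the same route as the paper: both reduce to Theorem~\ref{T3.5} with $\vertiii{\cdot}^*_1=\max_i\|\cdot\|^*$, obtain the termwise equalities via Proposition~\ref{P3.5+}\;\eqref{P3.6-2} (your elementary chain $\ang{x^*_i,\bar u-v_i}\le\|x^*_i\|^*\|\bar u-v_i\|\le\|\bar u-v_i\|$ is just that proposition specialized), and use $\bar u\notin\{v_1,\ldots,v_n\}$ exactly to upgrade $\max_i\|x^*_i\|^*=1$ to $\|x^*_i\|^*=1$ for all $i$ before intersecting the sets $\text{\rm \textbf{A}}_{\|\cdot\|}(v_i,x^*_i)$. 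No gaps.
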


\begin{proof}
\if{
Suppose that $\bar u\in \text{\rm Sol}(P,\vertiii{\cdot}_{1})$.	
By Theorem~\ref{T3.5}, there exist $x^*_1,\ldots,x^*_n\in X^*$ such that conditions \eqref{T3.9-0} are satisfied, and
\begin{gather}\label{T2.7-4}
\sum_{i=1}^{n}\ang{x^*_i,v_i- \bar u}=
\sum_{i=1}^{n}\|v_i-\bar u\|.
\end{gather}	
By Proposition~\ref{P2.6}\;\eqref{P2.6-2}, we have
$\ang{x^*_i,v_i-\bar u}=\|x^*_i\|^*\cdot\|v_i-\bar u\|$ $(i=1,\ldots,n)$.
If $v_{i}-\bar u\ne 0$ for some $i\in\{1,\ldots,n\}$, then it must hold $\|x^*_{i}\|^*=1$. 
Indeed, if $\|x^*_i\|^*<1$ then $\ang{x^*_{i},v_{i}-\bar u}<\|v_{i}-\bar u\|$, and consequently,
\begin{gather*}
\sum_{i=1}^{n}\ang{x^*_i,v_i- \bar u}\le\sum_{i=1}^{n}\|x^*_i\|^*\cdot\|v_i-\bar u\|<
\sum_{i=1}^{n}\|v_i-\bar u\|,
\end{gather*}	
which contradicts \eqref{T2.7-4}.
Thus, condition \eqref{T3.9-1} holds true.
Conversely, suppose that there exist $x^*_1,\ldots,x^*_n\in X^*$ such that conditions \eqref{T3.9-0} and \eqref{T3.9-1} are satisfied.
Then condition \eqref{T2.7-4} is satisfied.
By Theorem~\ref{T3.5}, we have $\bar u\in \text{\rm Sol}(P,\vertiii{\cdot}_{1})$.
}\fi
The first part is a direct consequence of Proposition~\ref{P3.5+}\;\eqref{P3.6-2} and Theorem~\ref{T3.5}.
Suppose that $\bar u\notin \{v_1,\ldots,v_n\}$.
By \eqref{T3.9-1}, 
\begin{gather}\label{T3.9-10}
\|x^*_1\|^*=\ldots=\|x^*_n\|^*=1.
\end{gather}	
Let $u\in \text{\rm Sol}(P,\vertiii{\cdot}_{1})$.
In view of \eqref{P2.5-1} with $\vertiii{\cdot}:=\vertiii{\cdot}_1$, we have
\begin{gather}\label{T3.9-3}
\sum_{i=1}^{n}\ang{x^*_i,u-v_i}=\sum_{i=1}^{n}\|u-v_i\|.
\end{gather}
Then
\begin{gather}\label{T2.7-3}
\ang{x^*_i,u-v_i}=\|u-v_i\|\;\;
(i=1,\ldots,n).
\end{gather}
Thus,
$u\in\bigcap_{i=1}^n\text{\rm \textbf{A}}_{\|\cdot\|}(v_i,x^*_i)$.
Conversely, suppose that $u\in\bigcap_{i=1}^n\text{\rm \textbf{A}}_{\|\cdot\|}(v_i,x^*_i)$.
By \eqref{T3.9-10} and \eqref{T2.7-3}, condition \eqref{T3.9-3} holds true.
By Theorem~\ref{T3.5}, we have $u\in \text{\rm Sol}(P,\vertiii{\cdot}_{1})$.
\end{proof}		

\begin{remark}
\begin{enumerate}
\item
It follows from \eqref{T3.9-1} that $x^*_i\in\partial\|\cdot\|(\bar u-v_i)$ if $\bar u\ne v_i$.  
\item 
A finite dimensional Minkowski space version of Theorem~\ref{T2.7} can be found in \cite[Theorems~3.1 \& 3.2]{MarSwaWei02}.
When $X:=\R^n$ is equipped with the Euclidean norm, Theorem~\ref{T2.7} improves \cite[Reformulation~18.4\;(i)]{BolMarSol99}.
\item 
A result similar to Theorem~\ref{T2.7} in the setting of a sum of convex functions in finite dimensional spaces can be found in \cite[Lemma~3.1]{DurMic85}.
\end{enumerate}
\end{remark}	

The following examples illustrate Theorem~\ref{T2.7}.
\begin{example}\label{E4.4}
Let $\R^2$ be equipped with  some norm $\|\cdot\|$, and
\begin{gather}\label{E4.4.1}
v:=(0,0),\;\;w:=(2,0),\;\;\bar u:=(1,0).
\end{gather}	
The objective function \eqref{4.1} is of the form
\begin{gather}\label{E4.4-0}
f(u)=\|u-v\|+\|u-w\|
\end{gather}	
for all $u\in\R^2.$
By Example~\ref{E3.7}, we have $\bar u\in\text{\rm Sol}(P)$.
Define $v^*:=(1,0)$ and $w^*:=(-1,0)$.
Then $v^*+w^*=0$.
\begin{enumerate}
\item\label{E4.4-1}
Suppose that $\|\cdot\|:=\|\cdot\|_\infty$.
The dual norm is $\|\cdot\|_1$. 
We have 
\begin{center}
$\|v^*\|_1=\|w^*\|_1=1,\;\ang{v^*,\bar u-v}=\|\bar u-v\|_\infty=1,\;\ang{w^*,\bar u-w}=\|\bar u-w\|_\infty=1$.
\end{center}
By Lemma~\ref{L1.1}\;\eqref{L1.1-4},
\begin{align}\notag
\text{\rm \textbf{A}}_{\|\cdot\|_\infty}(v,v^*)
&
=\left\{(u_1,u_2)\mid 
|u_1|=\max\{|u_1|,|u_2|\},\; u_1\ge 0
\right\}\\ \label{E4.4-7}
&=\left\{(u_1,u_2)\mid 
u_1\ge |u_2|
\right\},\\ \notag
\text{\rm \textbf{A}}_{\|\cdot\|_\infty}(w,w^*)
&
=\left\{(u_1,u_2)\mid 
|u_1-2|=\max\{|u_1-2|,|u_2|\},\;u_1\le 2
\right\}\\ \label{E4.4-8}
&=\left\{(u_1,u_2)\mid 2-u_1\ge |u_2|
	\right\}.
\end{align}	
By \eqref{T2.7-2}, 
\begin{gather*}
	\text{\rm Sol}(P)=\text{\rm \textbf{A}}_{\|\cdot\|_\infty}(v,v^*)\cap \text{\rm \textbf{A}}_{\|\cdot\|_\infty}(w,w^*)=\left\{(u_1,u_2)\mid 
	u_1\ge |u_2|,\;2-u_1\ge |u_2|
	\right\}.
\end{gather*}
\begin{figure}[H]
\centering
\includegraphics[width=0.51\linewidth]{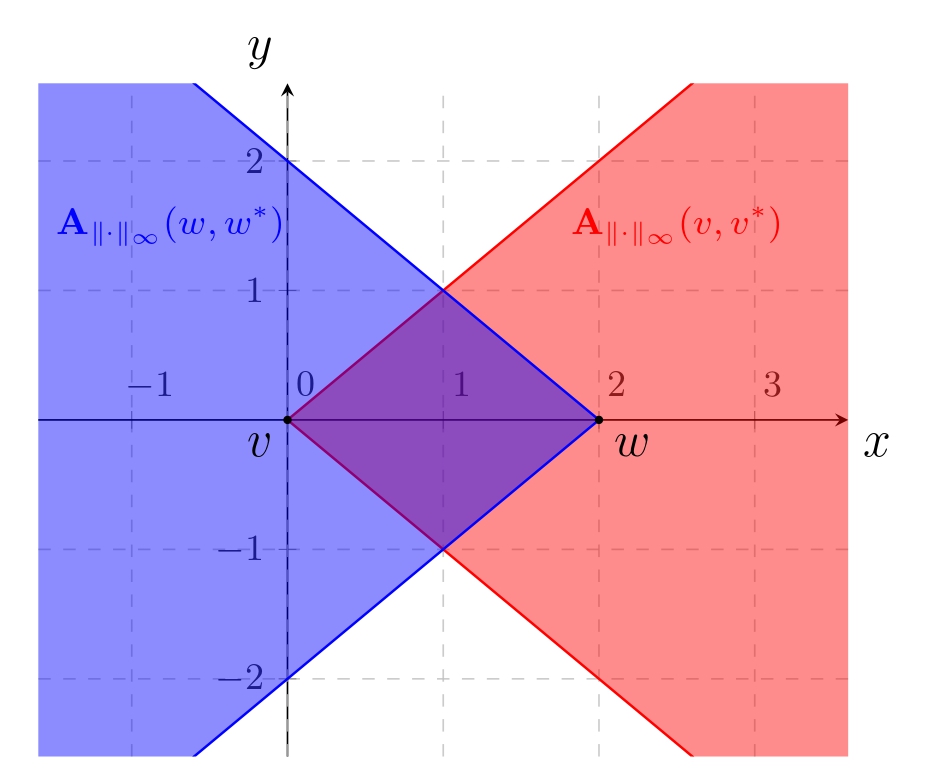}
\caption{Sets $\text{\rm \textbf{A}}_{\|\cdot\|_\infty}(v,v^*)$ and $\text{\rm \textbf{A}}_{\|\cdot\|_\infty}(w,w^*)$}
\label{fig1}
\end{figure}
\if{
\item\label{E4.4-2}
Suppose that $\|\cdot\|:=\|\cdot\|_1$.
The dual norm of $\|\cdot\|_1$ is $\|\cdot\|_\infty$. 
Then
\begin{gather*}
\max\{\|v^*\|_\infty,\|w^*\|_\infty\}=\max\{1,1\}=1,\\
\ang{v^*,v-\bar u}=\|v-\bar u\|_1=1,\;\;\ang{w^*,w-\bar u}=\|w-\bar u\|_1=1.
\end{gather*}	
By Lemma~\ref{L1.1}\;\eqref{L1.1-3},
\begin{align}\label{E4.4-5}
\text{\rm \textbf{A}}(v,v^*)
&=\left\{(u_1,u_2)\in\R^2\mid u_1\ge 0,\; u_2= 0\right\},\\ \label{E4.4-6}
\text{\rm \textbf{A}}(w,w^*)
&=\left\{(u_1,u_2)\in\R^2\mid u_1\le 2,\;u_2= 0\right\}.
\end{align}	
By \eqref{T2.7-2}, we have
$$\text{\rm Sol}(P)=\text{\rm \textbf{A}}(v,v^*)\cap \text{\rm \textbf{A}}(w,w^*)=\left\{(u_1,u_2)\in\R^2\mid 0\le u_1\le 2,\;u_2=0\right\}.$$
}\fi
\item\label{E4.4-3}
Suppose that $\|\cdot\|:=\|\cdot\|_p$ with $1\le p<\infty$.
The dual norm  is $\|\cdot\|_q$ with $q\in(1,\infty]$ satisfying
$\frac{1}{p}+\frac{1}{q}=1$.
We have  
\begin{center}
$\|v^*\|_q=\|w^*\|_q=1,\;\ang{v^*,\bar u-v}=\|\bar u-v\|_p=1,\;\ang{w^*,\bar u-w}=\|\bar u-w\|_p=1$.
\end{center}
By Lemma~\ref{L1.1}\;\eqref{L1.1-3} and \eqref{L1.1-2},
\begin{align}\label{E4.4-9}
\text{\rm \textbf{A}}_{\|\cdot\|_p}(v,v^*)
=\left\{(\lambda,0)\mid\lambda\ge 0 \right\},\;\; 
\text{\rm \textbf{A}}_{\|\cdot\|_p}(w,w^*)
=\left\{(\lambda,0)\mid \lambda\le 2\right\}.
\end{align}	
By \eqref{T2.7-2}, we have
$\text{\rm Sol}(P)=\text{\rm \textbf{A}}_{\|\cdot\|_p}(v,v^*)\cap \text{\rm \textbf{A}}_{\|\cdot\|_p}(w,w^*)=\left\{(\lambda,0)\mid 0\le \lambda\le 2
\right\}.$
\end{enumerate}
\begin{figure}[H]
	\centering
	\includegraphics[width=0.9\linewidth]{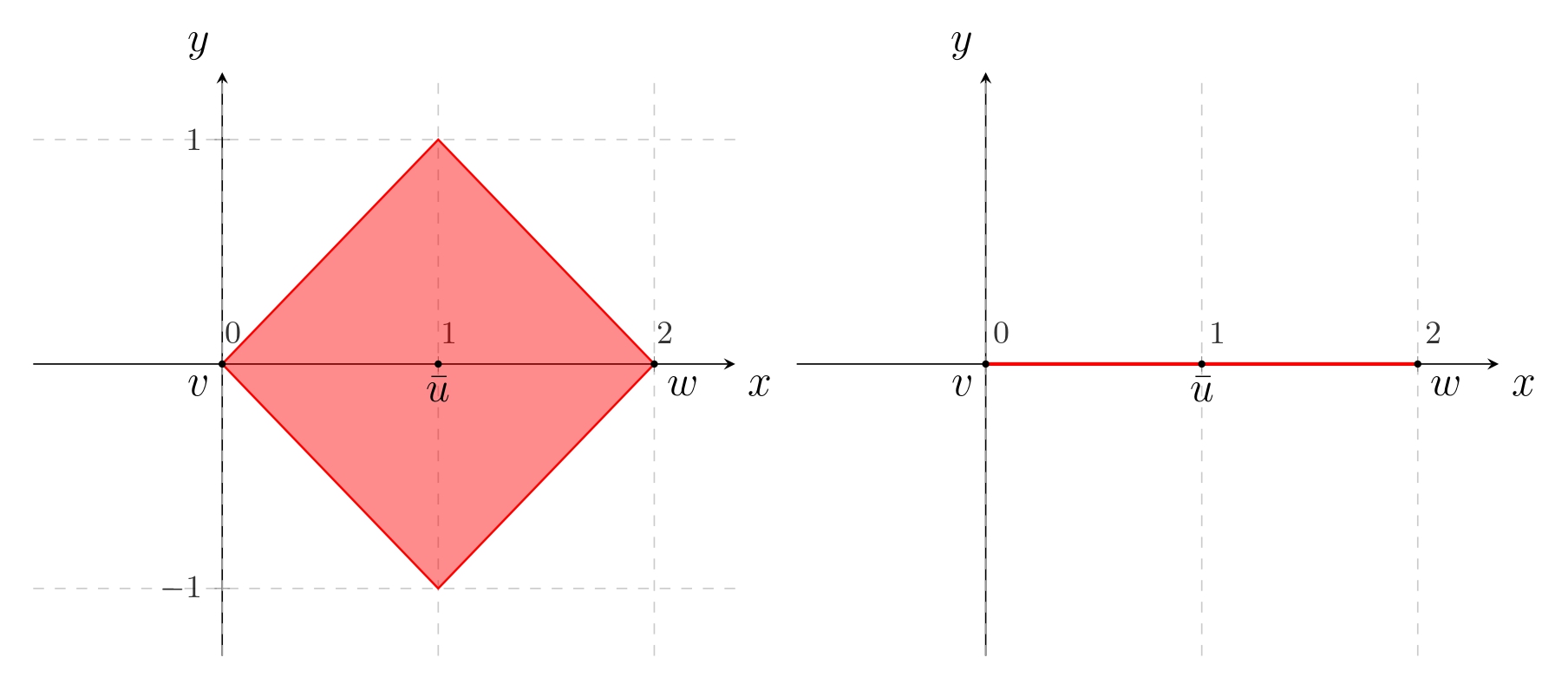}
   {\scriptsize
	Example~\ref{E4.4}\;\eqref{E4.4-1} \hspace{6cm} 
	Example~\ref{E4.4}\;\eqref{E4.4-3}
}
	\caption{Solution sets in Example~\ref{E4.4}}
	\label{fig2}
\end{figure}
\end{example}

\begin{example}\label{E4.3}
Let $X:=L^2([0,1])$.
It is a Hilbert space  with the inner product	
\begin{gather*}
\ang{x,y}:=\displaystyle\int_{0}^1 x(t)y(t)dt\;\;\text{for all}\;\;
x,y\in X.
\end{gather*}	
Let 
\begin{gather}\label{E5.4-1}
	v(t):=t\;\;\text{for all}\;\;t\in[0,1],\;\;w:=-v,\;\;\bar u:=0\in X.
\end{gather}	
The objective function is of the form \eqref{E4.4-0} for all $u\in X$.
By Example~\ref{E3.7}, we have $\bar u\in\text{\rm Sol}(P)$.
By Theorem~\ref{T2.7}, there
exist $v^*,w^*\in X^*$ such that
\begin{gather*}
v^*+w^*=0,\;
\max\{\|v^*\|^*,\|w^*\|^*\}=1,
\ang{v^*,\bar u-v}=\|\bar u-v\|,\;
\ang{w^*,\bar u-w}=\|\bar u-w\|.
\end{gather*}	
By the Riesz representation theorem, there exist unique vectors
$\bar v:=-\sqrt 3 v$ and $\bar w:=\sqrt 3 v$ 
such that the above conditions are satisfied with $\bar v$ and $\bar w$ in place of $v^*$ and $w^*$, respectively.
Indeed,
\begin{gather*}
\bar v+\bar w=0,\;\|\bar v\|=\|\bar w\|=1,\; 
\ang{\bar v,\bar u-v}=\|\bar u-v\|=1/\sqrt 3,\;
\ang{\bar w,\bar u-w}=\|\bar u-w\|=1/\sqrt 3.
\end{gather*}	
By Lemma~\ref{L1.1}\;\eqref{L1.1-1}, 
\begin{align*}
\text{\rm \textbf{A}}_{\|\cdot\|}(v,v^*)
&=\left\{v+\lambda \bar v\mid\lambda\ge 0 \right\}
=\left\{\lambda v\mid \lambda\le 1\right\},\\
\text{\rm \textbf{A}}_{\|\cdot\|}(w,w^*)
&=\left\{w+\lambda \bar w\mid\lambda\ge 0 \right\}
=\left\{\lambda v\mid \lambda\ge -1\right\}.
\end{align*}	
By \eqref{T2.7-2}, we have
$\text{\rm Sol}(P)=\text{\rm \textbf{A}}_{\|\cdot\|}(v,v^*)\cap \text{\rm \textbf{A}}_{\|\cdot\|}(w,w^*)=\left\{\lambda v \mid -1\le\lambda\le 1
\right\}.$
\end{example}

\section{Chebyshev Centre Problem}\label{S5}
Let $(X,\|\cdot\|)$ be a normed space.
We study a particular case of problem \eqref{P} with $\vertiii{\cdot}:=\vertiii{\cdot}_\infty$.
The  objective function is of the form:
\begin{gather}\label{5.1}
f(u)=\max_{1\le i\le n}\|u-v_i\|\;\;\text{for all}\;\;u\in X.
\end{gather}	
\if{
Define the active index set:
\begin{gather*}
	I(u):=\Big\{i\in\{1,\ldots,n\}\mid \|u-v_i\|=\max_{1\le j\le n}\|u-v_j\|\Big\}\;\;\text{for all}\;\;u\in X.
\end{gather*}	
}\fi

The following statement provides optimality conditions and a formula for the solution set of the Chebyshev centre problem.
\begin{theorem}\label{T5.1}
A vector $\bar u\in \text{\rm Sol}(P,\vertiii{\cdot}_\infty)$ if and only if there exist $x^*_1,\ldots,x^*_n\in X^*$ such that
\begin{gather}\label{T2.9-1}
\sum_{i=1}^{n}x^*_i=0,\;\;\sum_{i=1}^{n}\|x^*_i\|^*=1,
\\\notag
\ang{x^*_i,\bar u-v_i}=\|x^*_i\|^*\cdot\|\bar u-v_i\|,\;\;
\left(\|\bar u-v_i\|-\max_{1\le i\le n}\|\bar u-v_i\|\right)\cdot\|x^*_i\|^*=0\;\;(i=1,\ldots,n).
\end{gather}
Moreover, 
\begin{gather}\label{T2.9-3}
\text{\rm Sol}(P,\vertiii{\cdot}_\infty)=\bigcap_{x^*_i\ne 0} \text{ \rm \textbf{B}}(v_i,x_i^*)
\end{gather}
with
\begin{gather*}
\text{ \rm \textbf{B}}(v_i,x_i^*) :=\left\{u\in X\mid \langle x_i^*, u-v_i\rangle=\|x^*_i\|^*\cdot\max_{1\le j\le n}\|u-v_j\|\right\}.
\end{gather*}
\end{theorem}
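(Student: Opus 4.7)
The plan is to mirror the structure of the proof of Theorem~\ref{T2.7}, but replacing the appeal to Proposition~\ref{P3.5+}\,(\ref{P3.6-2}) with the corresponding statement for the maximum norm, namely part~(\ref{P3.6-1}). First I would apply Theorem~\ref{T3.5} with $\vertiii{\cdot}:=\vertiii{\cdot}_\infty$: the vector $\bar u$ is a solution if and only if there exist $x^*_1,\ldots,x^*_n\in X^*$ with $\sum_{i=1}^n x^*_i=0$, $\vertiii{(x^*_1,\ldots,x^*_n)}^*_\infty=1$, and $\sum_{i=1}^n\langle x^*_i,\bar u-v_i\rangle=\max_{1\le i\le n}\|\bar u-v_i\|$. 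By the duality relation~\eqref{dnorm}, $\vertiii{\cdot}^*_\infty=\vertiii{\cdot}_1$, which converts the normalization into $\sum_{i=1}^n\|x^*_i\|^*=1$. The final equality is precisely the hypothesis \eqref{P2.6-2.1} of Proposition~\ref{P3.5+}\,(\ref{P3.6-1}) for $\psi:=\psi_\infty$, and that proposition gives exactly the individual pairing condition $\langle x^*_i,\bar u-v_i\rangle=\|x^*_i\|^*\cdot\|\bar u-v_i\|$ together with the complementary-slackness identity $(\|\bar u-v_i\|-\max_j\|\bar u-v_j\|)\cdot\|x^*_i\|^*=0$. This establishes the first part.

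For the description \eqref{T2.9-3} of the solution set, the strategy is to invoke formula \eqref{P2.5-1} from Theorem~\ref{T3.5} and match it against $\bigcap_{x^*_i\ne 0}\text{\rm\textbf{B}}(v_i,x^*_i)$. For any $u$ in the solution set, I would chain the estimate
\begin{gather*}
\sum_{i=1}^n\langle x^*_i,u-v_i\rangle
\le\sum_{i=1}^n\|x^*_i\|^*\cdot\|u-v_i\|
\le\max_{1\le j\le n}\|u-v_j\|\cdot\sum_{i=1}^n\|x^*_i\|^*
=\max_{1\le j\le n}\|u-v_j\|,
\end{gather*}
whose left and right sides coincide by \eqref{P2.5-1}. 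Hence equality must hold throughout, which forces, for every $i$ with $x^*_i\ne 0$, both $\langle x^*_i,u-v_i\rangle=\|x^*_i\|^*\cdot\|u-v_i\|$ and $\|u-v_i\|=\max_j\|u-v_j\|$; combining them yields $\langle x^*_i,u-v_i\rangle=\|x^*_i\|^*\cdot\max_j\|u-v_j\|$, i.e.\ $u\in\text{\rm\textbf{B}}(v_i,x^*_i)$.

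For the reverse inclusion, if $u$ belongs to $\text{\rm\textbf{B}}(v_i,x^*_i)$ for each $i$ with $x^*_i\ne 0$, then summing the defining identities (and noting that terms with $x^*_i=0$ vanish automatically) gives
\begin{gather*}
\sum_{i=1}^n\langle x^*_i,u-v_i\rangle
=\Big(\sum_{i=1}^n\|x^*_i\|^*\Big)\cdot\max_{1\le j\le n}\|u-v_j\|
=\max_{1\le j\le n}\|u-v_j\|,
\end{gather*}
which is exactly the identity characterizing $\text{\rm Sol}(P,\vertiii{\cdot}_\infty)$ in \eqref{P2.5-1}.

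The main subtlety, rather than an obstacle, is the bookkeeping around indices $i$ with $x^*_i=0$: for such $i$ the set $\text{\rm\textbf{B}}(v_i,x^*_i)$ degenerates to all of $X$ and thus disappears from the intersection, while the complementary-slackness condition ensures that the indices kept in the intersection are precisely those for which $\|u-v_i\|$ must attain the maximum; this is exactly what allows the chain of inequalities above to collapse into equalities.
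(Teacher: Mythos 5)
Your proposal is correct and follows essentially the same route as the paper: the optimality characterization is obtained by combining Theorem~\ref{T3.5} (with $\vertiii{\cdot}_\infty^*=\vertiii{\cdot}_1$) with Proposition~\ref{P3.5+}\;\eqref{P3.6-1}, and the solution-set formula \eqref{T2.9-3} is derived from \eqref{P2.5-1} by forcing equality in the chain $\sum_i\langle x^*_i,u-v_i\rangle\le\sum_i\|x^*_i\|^*\|u-v_i\|\le\max_j\|u-v_j\|$, exactly as in the paper's argument that $\langle x^*_i,u-v_i\rangle=\|x^*_i\|^*\cdot\max_j\|u-v_j\|$ whenever $x^*_i\ne0$. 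Your explicit bookkeeping for the indices with $x^*_i=0$ matches the paper's restriction of the intersection to $x^*_i\ne0$.
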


\begin{proof}
\if{	
The dual norm of $\vertiii{\cdot}_\infty$ is $\vertiii{\cdot}_1$. 	
Suppose that $\bar u\in\text{\rm Sol}(P,\vertiii{\cdot}_\infty)$.
By Theorem~\ref{T3.5}, there exist $x^*_1,\ldots,x^*_n\in X^*$ such that conditions \eqref{T2.9-1} are satisfied, and
\begin{gather}\label{T2.9-4}
\sum_{i=1}^{n}\langle x^*_i,v_i-\bar u \rangle=\max\limits_{1\le j\le n}\|v_j-\bar u\|.
\end{gather}
By Proposition~\ref{P2.6}\;\eqref{P2.6-2}, we have
$\ang{x^*_i,v_i-\bar u}=\|x^*_i\|^*\cdot\|v_i-\bar u\|$ $(i=1,\ldots,n)$.
If $x^*_i\ne 0$ for some $i\notin I(\bar u)$, then
\begin{gather*}
\sum_{i=1}^{n}\ang{x^*_i,v_i- \bar u}\le\sum_{i=1}^{n}\|x^*_i\|^*\cdot\|v_i-\bar u\|<\max\limits_{1\le j\le n}\|v_j-\bar u\|,
\end{gather*}	
which contradicts \eqref{T2.9-4}.
Thus, condition \eqref{T2.9-2} holds true.
Conversely, suppose that there exist $x^*_1,\ldots,x^*_n\in X^*$ such that conditions \eqref{T2.9-1} and \eqref{T2.9-2} are satisfied.
Then condition \eqref{T2.9-4} is satisfied.
By Theorem~\ref{T3.5}, we have $\bar u\in \text{\rm Sol}(P,\vertiii{\cdot}_{\infty})$.
}\fi
The first part  is a direct consequence of Proposition~\ref{P3.5+}\;\eqref{P3.6-1} and Theorem~\ref{T3.5}.
We now prove \eqref{T2.9-3}.
Let $u\in \text{\rm Sol}(P,\vertiii{\cdot}_\infty)$.
In view of \eqref{P2.5-1} with $\vertiii{\cdot}:=\vertiii{\cdot}_\infty$, we have
\begin{gather}\label{T2.9-5}
\sum_{i=1}^{n}\ang{x^*_i,u-v_i}=M,
\end{gather}
where $M:=\max_{1\le j\le n} \|u-v_j\|$.
Then 
\begin{gather}\label{T2.9-6}
\langle x_i^*,u-v_i\rangle=M\cdot\|x^*_i\|^*
\end{gather}	
for $x^*_i\ne 0$.
Indeed, if $\langle x_i^*, u-v_i\rangle<M\cdot\|x^*_i\|^*,$
then,  by the second equality in \eqref{T2.9-1}, we have $\sum_{i=1}^{n}\ang{x^*_i,u-v_i}<M$
contradicting \eqref{T2.9-5}.
Thus, $u\in\bigcap_{i=1}^n\text{ \rm \textbf{B}}(v_i,x^*_i)$.
Conversely, suppose that $u\in \bigcap_{i=1}^n \text{ \rm \textbf{B}}(v_i,x_i^*)$. 
By \eqref{T2.9-6} and the second equality in \eqref{T2.9-1}, condition \eqref{T2.9-5} is satisfied.
By Theorem~\ref{T3.5}, we have $u\in \text{\rm Sol}(P,\vertiii{\cdot}_\infty)$.
\end{proof}

\begin{remark}\label{R5.2}
\begin{enumerate}
\item\label{R5.2-1}
In view of conditions \eqref{T2.9-1}, at least two dual vectors are nonzero.
\item\label{R5.2-2}
The set $\text{ \rm \textbf{B}}(v_i,x^*_i)$ is  the intersection of $\text{\rm \textbf{A}}_{\|\cdot\|}(v_i,x^*_i)$ given by \eqref{T2.7-30} and
the set of the farthest Voronoi cells \cite{GobMarTod19} of $v_i$ with respect to the set $\{v_1,\ldots,v_n\}$ and the norm $\|\cdot\|$ given by
\begin{gather}\label{R6.2-1}
\text{\rm \textbf{FV}}_{\|\cdot\|}(v_i\mid v_1,\ldots,v_n):=\left\{u\in X\mid \|u-v_i\|=\max_{1\le j\le n}\|u-v_j\|\right\}.
\end{gather}
If $X$ is a Hilbert space, then
\begin{gather*}
\text{\rm \textbf{FV}}_{\|\cdot\|}(v_i\mid v_1,\ldots,v_n)=\bigcap_{j=1}^n\left\{u\in X\mid \ang{u,v_j-v_i}\ge \dfrac{\|v_j\|^2-\|v_i\|^2 }{2}\right\};
\end{gather*}		
 see, for instance, \cite[p. 309]{GobMarTod19}.
 \item 
The solution set formula \eqref{T2.9-3} provides a partial answer to the open problem \cite[Problem~7]{Hoa19}.
\item 
To the best of our knowledge, although the dual characterizations of the Chebyshev centre in Theorem~\ref{T5.1} follow directly from standard tools of convex and functional analysis, they have not been explicitly formulated in the literature.
In particular, the explicit description of the solution set in \eqref{T2.9-3} provides a  way to obtain all solutions from any given one together with its associated dual vectors.
\end{enumerate}
\end{remark}	
\if{
The next result presents explicit formulas for the set of farthest Voronoi cells in specific cases of the norm on 
$X$.
\begin{lemma}\label{L5.3}
Let $X$ be a normed vector space, and $v_1,\ldots,v_n\in X$.
\begin{enumerate}
\item\label{L5.3-1}
Suppose that $X$ is a Hilbert space.
Then
\begin{gather}\label{P2.13-4}
\text{\rm \textbf{FV}}(v_i\mid v_1,\ldots,v_n)=\bigcap_{j=1}^n\left\{u\in X\mid \ang{u,v_j-v_i}\ge \dfrac{\|v_j\|^2-\|v_i\|^2 }{2}\right\}\;\;(i=1,\ldots,n).
\end{gather}		
\item\label{L5.3-2}
Suppose that $X:=\R^n$, $\|\cdot\|:=\|\cdot\|_1$, and
$v_i:=(v_{i1},\ldots,v_{in})$ $(i=1,\ldots,n)$ .
Then
\begin{gather*}
\text{\rm \textbf{FV}}(v_i\mid v_1,\ldots,v_n)=\bigcap_{j=1}^n\left\{u:=(u_1,\ldots,u_n)\in\R^n\mid \sum_{k=1}^{n}\left(|v_{ik}-u_k|-|v_{jk}-u_k|\right)\ge 0\right\},
\end{gather*}	
\end{enumerate}
\end{lemma}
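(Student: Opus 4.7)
The plan is to argue directly from the definition of the farthest Voronoi cell in \eqref{R6.2-1}, namely that $u\in\text{\rm \textbf{FV}}(v_i\mid v_1,\ldots,v_n)$ if and only if $\|u-v_i\|\ge\|u-v_j\|$ for every $j\in\{1,\ldots,n\}$. Both assertions then reduce to rewriting this finite system of inequalities in the claimed explicit form, so the argument is a direct computation rather than an application of any deeper tool from the preceding sections.

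For assertion \eqref{L5.3-1}, the key observation is that since $\|u-v_i\|$ and $\|u-v_j\|$ are nonnegative, one has $\|u-v_i\|\ge\|u-v_j\|$ if and only if $\|u-v_i\|^2\ge\|u-v_j\|^2$. In a Hilbert space the norm is induced by the inner product, so expanding and cancelling $\|u\|^2$ on both sides rewrites the squared inequality as $-2\ang{u,v_i}+\|v_i\|^2\ge -2\ang{u,v_j}+\|v_j\|^2$, which rearranges to $\ang{u,v_j-v_i}\ge(\|v_j\|^2-\|v_i\|^2)/2$. Intersecting over $j=1,\ldots,n$ produces exactly \eqref{P2.13-4}, and the reverse inclusion follows by running the same equivalences backward.

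For assertion \eqref{L5.3-2}, the $\ell_1$ norm on $\R^n$ gives $\|u-v_j\|_1=\sum_{k=1}^{n}|u_k-v_{jk}|$ for each $j$, so the inequality $\|u-v_i\|_1\ge\|u-v_j\|_1$ becomes $\sum_{k=1}^{n}(|v_{ik}-u_k|-|v_{jk}-u_k|)\ge 0$ without further manipulation. Intersecting over $j=1,\ldots,n$ yields the stated description directly.

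Neither part poses a genuine obstacle; the content is essentially bookkeeping. The one point worth stating explicitly is the nonnegativity-based passage between a norm inequality and its squared version used in \eqref{L5.3-1}, which is what permits the Hilbert space expansion to be a chain of equivalences rather than merely an implication; I would flag this at the start of that part to keep the argument clean. In \eqref{L5.3-2} no such preliminary step is needed, since the $\ell_1$ norm is already a sum of absolute values and the inequality translates coordinatewise by inspection.
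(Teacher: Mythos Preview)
Your proposal is correct and follows the same route as the paper: for \eqref{L5.3-1} you square the norm inequality and expand via the inner product exactly as the paper indicates (``$\|v_i-u\|^2\ge\|v_j-u\|^2$ if and only if $\ang{u,v_j-v_i}\ge\tfrac{\|v_j\|^2-\|v_i\|^2}{2}$''), and for \eqref{L5.3-2} you simply unpack the definition of the $\ell_1$ norm, which is all the paper does too. There is nothing to add.
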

\begin{proof}
Assertion \eqref{L5.3-1}  is a consequence of the standard fact that
$\|v_i-u\|^2 \ge \|v_j-u\|^2$ if and only if $\ang{u,v_j-v_i}\ge \frac{\|v_j\|^2-\|v_i\|^2 }{2}$ for all $i,j=1,\ldots,n$, while assertion \eqref{L5.3-2} follows from the definition \eqref{FV}.
\end{proof}	

\begin{remark}
\begin{enumerate}
\item		
The finite dimentional version of Lemma~\ref{L5.3}\;\eqref{L5.3-1} can be find in \cite[p. 309]{GobMarTod19}.
\item
Let $i,j\in\{1,\ldots,n\}$ and $u:=(u_1,\ldots,u_n)$.
Set $\gamma_{ij,k}(u_k):=|v_{ik}-u_k|-|v_{jk}-u_k|$ $(k=1,\ldots,n)$.
Let $k=1,\ldots,n$.
Then
\begin{gather*}
\gamma_{ij,k}(u_k)=\begin{cases}
v_{ik}-v_{jk}& \text{if } u_k\le \min\{v_{ik},v_{jk}\}\\
v_{jk}-v_{ik} & \text{if }
u_k\ge \max\{v_{ik},v_{jk}\},\\
2u_k-v_{ik}-v_{jk} & \text{if }
v_{ik}<v_{jk}\;\;\text{and}\;\;
v_{ik}<u_k<v_{jk},\\
v_{ik}+v_{jk}-2u_k & \text{if }
v_{jk}<v_{ik}\;\;\text{and}\;\;
v_{jk}<u_k<v_{ik}.
\end{cases}
\end{gather*}
\end{enumerate}	
\end{remark}
}\fi

The next example computes the set \eqref{R6.2-1} for  $p$-norms with $p\in[1,\infty]$.
\begin{example}\label{E5.3}
Let $\R^2$ be equipped with  some norm $\|\cdot\|$, $v:=(0,0)$ and $w:=(2,0)$.
\begin{enumerate}
\item\label{E5.3-2}
Let $\|\cdot\|:=\|\cdot\|_\infty$.
By \eqref{R6.2-1},
\begin{align*}
\text{\rm \textbf{FV}}_{\|\cdot\|_\infty}(v\mid v,w)
&=\left\{(u_1,u_2)\mid \max\{|u_1|,|u_2|\}\ge\max\{|u_1-2|,|u_2|\}\right\}\\
&=\left\{(u_1,u_2)\mid u_1\ge 1\right\}\cup \left\{(u_1,u_2)\mid |u_1-2|\le|u_2|\right\},\\
\text{\rm \textbf{FV}}_{\|\cdot\|_\infty}(w\mid v,w)
&=\left\{(u_1,u_2)\mid \max\{|u_1-2|,|u_2|\}\ge\max\{|u_1|,|u_2|\}\right\}\\
&=\left\{(u_1,u_2)\mid u_1\le 1\right\}\cup \left\{(u_1,u_2)\mid |u_1|\le|u_2|\right\}.
\end{align*}	
\item\label{E5.3-1}
Let $\|\cdot\|:=\|\cdot\|_p$ with $p\in[1,\infty)$.
By \eqref{R6.2-1},
\begin{align*}
\text{\rm \textbf{FV}}_{\|\cdot\|_p}(v\mid v,w)
&=\left\{(u_1,u_2)\mid |u_1|\ge|u_1-2|\right\}
=\left\{(u_1,u_2)\mid u_1\ge 1\right\},\\ 
\text{\rm \textbf{FV}}_{\|\cdot\|_p}(w\mid v,w)
&=\left\{(u_1,u_2)\mid |u_1-2|\ge|u_1|\right\}
=\left\{(u_1,u_2)\mid u_1\le 1\right\}.
\end{align*}	
\end{enumerate}		
\begin{figure}[H]
\centering
\includegraphics[width=0.9\linewidth]{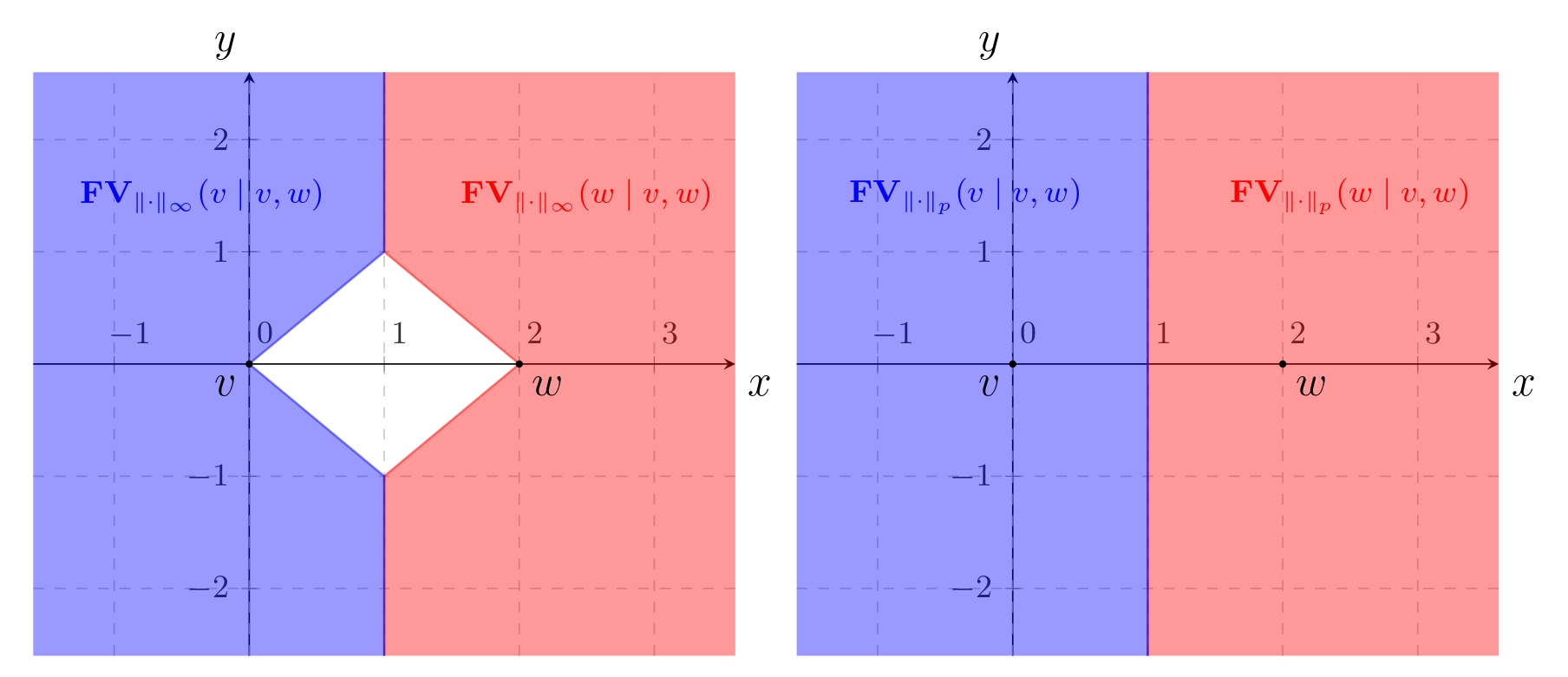}
{\scriptsize
Example~\ref{E5.3}\;\eqref{E5.3-2} \hspace{4cm} 
Example~\ref{E5.3}\;\eqref{E5.3-1}
}
\caption{Example~\ref{E5.3}}
\label{fig3}
\end{figure}
\end{example}	

The following examples illustrate  Theorem~\ref{T5.1}.
\begin{example}\label{E5.5}
Let $\R^2$ be equipped with some norm $\|\cdot\|$, vectors $v,w$ and $\bar u$ be given by \eqref{E4.4.1}.
The objective function \eqref{5.1} is of the form
\begin{gather}\label{E5.5-0}
f(u):=\max\{\|u-v\|,\|u-w\|\}
\end{gather}	
for all $u\in\R^2.$
By Example~\ref{E3.7}, we have $\bar u\in\text{\rm Sol}(P)$.
Define $v^*:=(1/2,0)$ and $w^*:=(-1/2,0)$.
Then $v^*+w^*=0$.
\begin{enumerate}
\item\label{E5.5-1}
Suppose that $\|\cdot\|:=\|\cdot\|_\infty$.
We have $\|v^*\|_1=\|w^*\|_1=1/2$, and
$$\ang{v^*,\bar u-v}=\|v^*\|_1\cdot\|\bar u-v\|_\infty=1/2,\;\ang{w^*,\bar u-w}=\|w^*\|_1\cdot\|\bar u-w\|_\infty=1/2.$$
By \eqref{E4.4-7}, \eqref{E4.4-8}, Example~\ref{E5.3}\;\eqref{E5.3-2} and Remark~\ref{R5.2}\;\eqref{R5.2-2},
\begin{align*}
\text{ \rm \textbf{B}}_{\|\cdot\|_\infty}(v,v^*)
&=\text{\rm \textbf{A}}_{\|\cdot\|_\infty}(v,v^*)\cap \text{\rm \textbf{FV}}_{\|\cdot\|_\infty}(v\mid v,w)
=
\left\{(u_1,u_2)\mid u_1\ge 1,\;u_1\ge |u_2|\right\},\\
\text{ \rm \textbf{B}}_{\|\cdot\|_\infty}(w,w^*)
&=\text{\rm \textbf{A}}_{\|\cdot\|_\infty}(w,w^*)\cap \text{\rm \textbf{FV}}_{\|\cdot\|_\infty}(w\mid v,w)
=\left\{(u_1,u_2)\mid u_1\le 1,\; 2-u_1\ge|u_2|\right\}.
\end{align*}	
By \eqref{T2.9-3}, we have
$\text{\rm Sol}(P)=\text{ \rm \textbf{B}}_{\|\cdot\|_\infty}(v,v^*)\cap \text{ \rm \textbf{B}}_{\|\cdot\|_\infty}(w,w^*)=\left\{(1,\lambda)\mid |\lambda|\le 1\right\}.$
\item\label{E5.5-2}
Suppose that $\|\cdot\|:=\|\cdot\|_p$ with $1\le p<\infty$.
The dual norm  is $\|\cdot\|_q$ with $q\in(1,\infty]$ satisfying
$\frac{1}{p}+\frac{1}{q}=1$.
We have $\|v^*\|_q=\|w^*\|_q=1/2$, and
$$\ang{v^*,\bar u-v}=\|v^*\|_q\cdot \|\bar u-v\|_p=1/2,\;\ang{w^*,\bar u-w}=\|w^*\|_q\cdot\|\bar u-w\|_p=1/2.$$
In view of \eqref{E4.4-9},  Example~\ref{E5.3}\;\eqref{E5.3-1} and
Remark~\ref{R5.2}\;\eqref{R5.2-2}, 
\begin{align*}
\text{ \rm \textbf{B}}_{\|\cdot\|_p}(v,v^*)
&=\text{\rm \textbf{A}}_{\|\cdot\|_p}(v,v^*)\cap \text{\rm \textbf{FV}}_{\|\cdot\|_p}(v\mid v,w)=
\left\{(\lambda,0)\mid \lambda\ge 1\right\},\\
\text{ \rm \textbf{B}}_{\|\cdot\|_p}(w,w^*)
&=\text{\rm \textbf{A}}_{\|\cdot\|_p}(w,w^*)\cap \text{\rm \textbf{FV}}_{\|\cdot\|_p}(w\mid v,w)=\left\{(\lambda,0)\mid \lambda\le 1\right\}.
\end{align*}	
By \eqref{T2.9-3}, we have $\text{\rm Sol}(P)=\text{ \rm \textbf{B}}_{\|\cdot\|_p}(v,v^*)\cap \text{ \rm \textbf{B}}_{\|\cdot\|_p}(w,w^*)=\{\bar u\}.$
\end{enumerate}
\begin{figure}[H]
\centering
\includegraphics[width=0.9\linewidth]{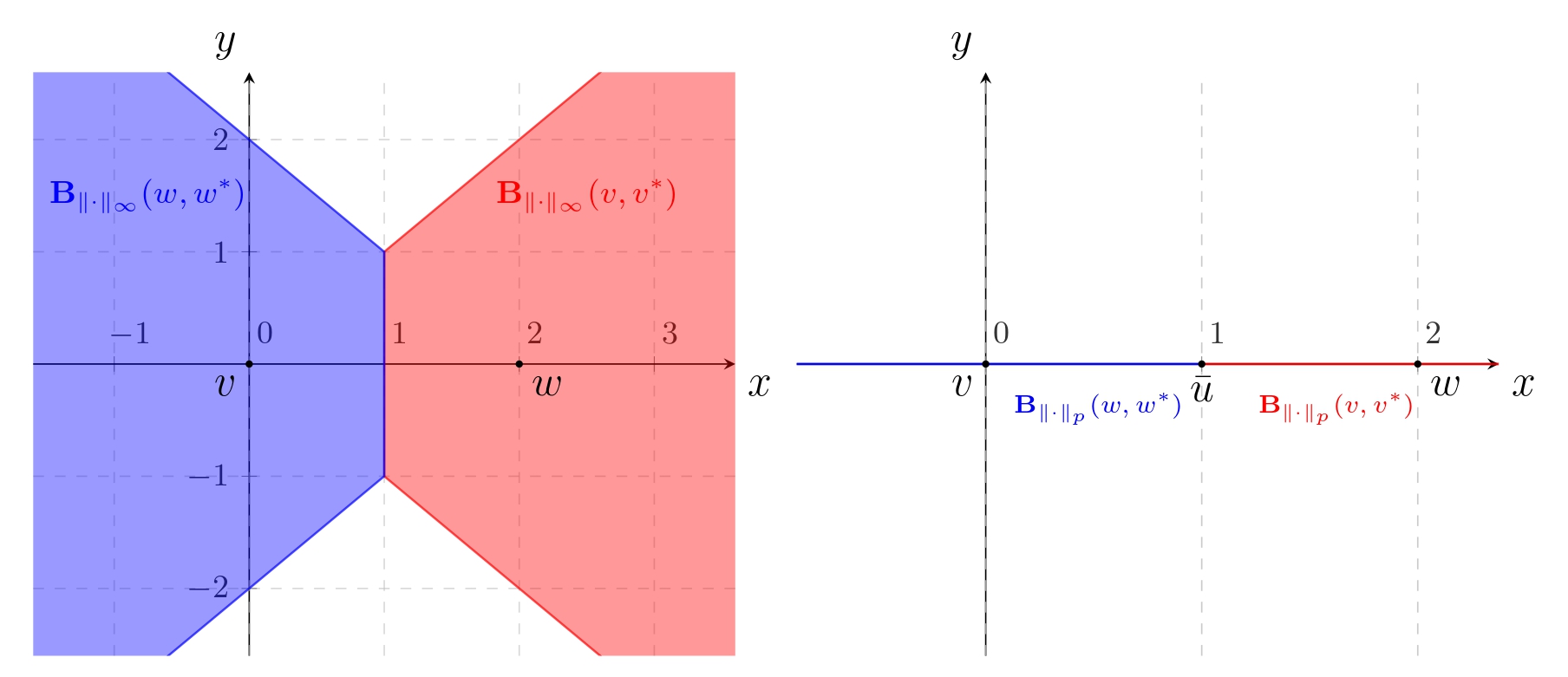}
{\scriptsize
Example~\ref{E5.5}\;\eqref{E5.5-1} \hspace{4cm} 
Example~\ref{E5.3}\;\eqref{E5.3-1}
}
\caption{Example~\ref{E5.5}}
\label{fig4}
\end{figure}
\end{example}

\begin{example}
Let $X$ be the Hilbert space in Example~\ref{E4.3}, and vectors $v,w,\bar u$ be given by \eqref{E5.4-1}.
The objective function is of the form
\eqref{E5.5-0} for all $u\in X$.
By Example~\ref{E3.7}, we have $\bar u\in\text{\rm Sol}(P)$.
By Theorem~\ref{T5.1}, there exist $v^*,w^*\in X^*$ such that
\begin{gather*}
v^*+w^*=0,\;
\|v^*\|^*+\|w^*\|^*=1,\;
\ang{v^*,\bar u-v}=\|v^*\|^*\cdot\|\bar u-v\|,\;
\ang{w^*,\bar u-w}=\|w^*\|^*\cdot\|\bar u-w\|.
\end{gather*}	
By the Riesz representation theorem, there exist unique vectors
$\bar v:=-\frac{\sqrt 3}{2}v$ and $\bar w:=\frac{\sqrt 3}{2} v$  such that the above conditions are satisfied with $\bar v$ and $\bar w$ in place of $v^*$ and $w^*$, respectively.
Indeed, 
\begin{gather*}
\bar v+\bar w=0,\;\|\bar v\|=\|\bar w\|=1/2,\\
\ang{\bar v,\bar u-v}=\|\bar v\|\cdot\|\bar u-v\|=\frac{1}{
2\sqrt 3},\;\ang{\bar w,\bar u-w}=
\|\bar w\|\cdot\|\bar u-w\|=\frac{1}{
2\sqrt 3}.
\end{gather*}	
By Lemma~\ref{L1.1}\;\eqref{L1.1-1}, 
\begin{align*}
	\text{\rm \textbf{A}}_{\|\cdot\|}(v,v^*)
	&=\left\{v+\lambda \bar v\mid\lambda\ge 0 \right\}
	=\left\{\lambda v\mid \lambda\le 1\right\},\\
	\text{\rm \textbf{A}}_{\|\cdot\|}(w,w^*)
	&=\left\{w+\lambda \bar w\mid\lambda\ge 0 \right\}
	=\left\{\lambda v\mid \lambda\ge -1\right\}.
\end{align*}	
By \eqref{R6.2-1},
\begin{align*}
\text{\rm \textbf{FV}}_{\|\cdot\|}(v\mid v,w)
&=\left\{u\in X\mid \|u-v\|\ge\|u-w\|
\right\}=\left\{u\in X\mid \ang{u,v}\ge 0
\right\},\\
\text{\rm \textbf{FV}}_{\|\cdot\|}(w\mid v,w)
&=\left\{u\in X\mid  \|u-w\|\ge\|u-v\|\right\}=\left\{u\in X\mid \ang{u,v}\le 0
\right\}.
\end{align*}	
By Remark~\ref{R5.2}\;\eqref{R5.2-2}, 
\begin{align*}
\text{ \rm \textbf{B}}_{\|\cdot\|}(v,v^*)
&=\text{\rm \textbf{A}}_{\|\cdot\|}(v,v^*)\cap \text{\rm \textbf{FV}}_{\|\cdot\|}(v\mid v,w)=
\left\{\lambda v \mid 0\le \lambda\le 1\right\},\\
\text{ \rm \textbf{B}}_{\|\cdot\|}(w,w^*)
&=\text{\rm \textbf{A}}_{\|\cdot\|}(w,w^*)\cap \text{\rm \textbf{FV}}_{\|\cdot\|}(w\mid v,w)=\left\{\lambda v \mid -1\le \lambda\le 0\right\}.
\end{align*}	
By \eqref{T2.9-3}, we have
$\text{\rm Sol}(P)=\text{ \rm \textbf{B}}_{\|\cdot\|}(v,v^*)\cap \text{ \rm \textbf{B}}_{\|\cdot\|}(w,w^*)=\{\bar u\}.$
\end{example}	

\section{$p$-Fermat-Torricelli Problem}\label{S6}
Let $(X,\|\cdot\|)$ be a normed space.
We study a particular case of problem \eqref{P} with $\vertiii{\cdot}:=\vertiii{\cdot}_p$ with $p\in(1,\infty)$.
The  objective function is of the form:
\begin{gather}\label{6.1}
f(u)=\left(\|u-v_1\|^p+\ldots+\|u-v_n\|^p\right)^{\frac{1}{p}}  \;\;\text{for all}\;\;u\in X.
\end{gather}	

The following statement provides optimality conditions and a formula for the solution set of the $p$-Fermat-Torricelli problem.
\begin{theorem}\label{T3.16}
A vector $\bar u\in \text{\rm Sol}(P,\vertiii{\cdot}_p)$ if and only if there exist $x^*_1,\ldots,x^*_n\in X^*$ such that
\begin{gather}\label{T3.16-1}
\sum_{i=1}^{n}x^*_i=0,\;\;\sum_{i=1}^{n}\|x^*_i\|^{*q}=1,\\ \label{T3.16-2}
\ang{x^*_i,\bar u-v_i}=\|x^*_i\|^*\cdot\|\bar u-v_i\|,\;\;
\|x^*_i\|^{*q}=\dfrac{\|\bar u-v_i\|^p}{\sum_{i=1}^{n}\|\bar u-v_i\|^p}\;\;(i=1,\ldots,n).
\end{gather}
Moreover,
\begin{gather}\label{T3.16-3}
\text{\rm Sol}(P,\vertiii{\cdot}_p)=\bigcap_{i=1}^n \text{\rm \textbf{C}}(v_i,x_i^*),
\end{gather}
where 
\begin{gather}\label{T6.1-2}
\text{\rm \textbf{C}}(v_i,x_i^*) :=\text{\rm \textbf{A}}(v_i,x_i^*)
\cap \text{\rm \textbf{D}}(v_i,x_i^*)
\end{gather}
with $\text{\rm \textbf{A}}(v_i,x_i^*)$ given by \eqref{T2.7-30} and
\begin{gather}\label{T6.1-1}
\text{\rm \textbf{D}}(v_i,x_i^*):=	
\left\{u\in X\mid \|x^*_i\|^{*q}=\dfrac{\|u-v_i\|^p}{\sum_{i=1}^{n}\|u-v_i\|^p} \right\}.
\end{gather}	
\end{theorem}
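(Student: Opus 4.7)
The plan is to mimic the structure of the proofs of Theorems~\ref{T2.7} and \ref{T5.1}, applying Theorem~\ref{T3.5} together with Proposition~\ref{P3.5+}\;\eqref{P3.6-3}, which characterizes when equality holds in the H\"older-type estimate tailored to the $p$-norm on $X^n$.

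For the first part, I would start from Theorem~\ref{T3.5} applied with $\vertiii{\cdot}:=\vertiii{\cdot}_p$. Since the dual norm of $\vertiii{\cdot}_p$ is $\vertiii{\cdot}_q$ by \eqref{dnorm}, the normalization $\vertiii{(x^*_1,\ldots,x^*_n)}^*_p=1$ becomes precisely $\sum_{i=1}^n\|x^*_i\|^{*q}=1$, giving \eqref{T3.16-1}. The coupling equation
\begin{gather*}
\sum_{i=1}^n\langle x^*_i,\bar u-v_i\rangle=\vertiii{(\bar u-v_1,\ldots,\bar u-v_n)}_p
\end{gather*}
combined with $\vertiii{(x^*_1,\ldots,x^*_n)}_q=1$ is exactly equality \eqref{P2.6-2.1} with $\psi:=\psi_p$. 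Proposition~\ref{P3.5+}\;\eqref{P3.6-3} then translates this equality into the conditions \eqref{P2.6-2.2} and \eqref{P2.6-5.1}; the former reads as $\langle x^*_i,\bar u-v_i\rangle=\|x^*_i\|^*\|\bar u-v_i\|$, while the latter, upon using $\sum_j\|x^*_j\|^{*q}=1$, simplifies to $\|x^*_i\|^{*q}=\|\bar u-v_i\|^p/\sum_j\|\bar u-v_j\|^p$. This gives \eqref{T3.16-2}, and conversely the same two conditions imply the coupling equation, closing the equivalence.

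For the second part, I would invoke formula \eqref{P2.5-1} from Theorem~\ref{T3.5}, so that $u\in\text{\rm Sol}(P,\vertiii{\cdot}_p)$ if and only if $\sum_i\langle x^*_i,u-v_i\rangle=\vertiii{(u-v_1,\ldots,u-v_n)}_p$. Since $\vertiii{(x^*_1,\ldots,x^*_n)}_q=1$, applying Proposition~\ref{P3.5+}\;\eqref{P3.6-3} to the pair $((x^*_1,\ldots,x^*_n),(u-v_1,\ldots,u-v_n))$ shows that this equality is equivalent to $\langle x^*_i,u-v_i\rangle=\|x^*_i\|^*\|u-v_i\|$ and $\|x^*_i\|^{*q}=\|u-v_i\|^p/\sum_j\|u-v_j\|^p$ holding for every $i$. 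The first batch is exactly $u\in\text{\rm \textbf{A}}_{\|\cdot\|}(v_i,x^*_i)$ by \eqref{T2.7-30}, and the second is exactly $u\in\text{\rm \textbf{D}}(v_i,x^*_i)$ by \eqref{T6.1-1}, so the solution set equals $\bigcap_i\text{\rm \textbf{C}}(v_i,x^*_i)$ by \eqref{T6.1-2}.

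I expect no serious obstacle. The only mildly delicate point is the converse direction of the second part, where one must verify directly that membership in $\bigcap_i\text{\rm \textbf{C}}(v_i,x^*_i)$ indeed produces the required coupling equation. A short computation will do: writing $S_p:=\sum_j\|u-v_j\|^p$, the membership in $\text{\rm \textbf{D}}(v_i,x^*_i)$ gives $\|x^*_i\|^*=(\|u-v_i\|^p/S_p)^{1/q}$, hence $\|x^*_i\|^*\|u-v_i\|=\|u-v_i\|^p/S_p^{1/q}$ using $p/q+1=p$, and summing over $i$ yields $S_p/S_p^{1/q}=S_p^{1/p}$, which coincides with $\vertiii{(u-v_1,\ldots,u-v_n)}_p$. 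The remaining work is just careful bookkeeping between the normalized dual conditions appearing in the statement and the general ratio conditions in Proposition~\ref{P3.5+}\;\eqref{P3.6-3}.
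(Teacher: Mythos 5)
Your proposal is correct and follows essentially the same route as the paper: the first part is obtained by combining Theorem~\ref{T3.5} with Proposition~\ref{P3.5+}\;\eqref{P3.6-3} (using the normalization $\sum_{i=1}^n\|x^*_i\|^{*q}=1$ to reduce \eqref{P2.6-5.1} to \eqref{T3.16-2}), and the second part uses formula \eqref{P2.5-1} together with the same equality characterization, including the identical direct computation $\sum_i\|x^*_i\|^*\|u-v_i\|=S_p/S_p^{1/q}=S_p^{1/p}$ for the converse inclusion. No gaps.
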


\begin{proof}
\if{	
The dual norm of $\vertiii{\cdot}_p$ is $\vertiii{\cdot}_q$. 		
Let $\bar u\in \text{\rm Sol}(P,\vertiii{\cdot}_p)$.	
By Theorem~\ref{T3.5}, there exist $x^*_1,\ldots,x^*_n\in X^*$ such that
condition \eqref{T3.16-1} is satisfied and
\begin{gather}\label{T3.16-4}
\sum_{i=1}^{n}\ang{x^*_i,v_i- \bar u}=
\left(\sum_{i=1}^{n}\|v_i-\bar u\|^p\right)^{\frac{1}{p}}.
\end{gather}	
By Proposition~\ref{P2.6}\;\eqref{P2.6-2}, we have
$\ang{x^*_i,v_i-\bar u}=\|x^*_i\|^*\cdot\|v_i-\bar u\|$ $(i=1,\ldots,n)$.
By the H\"older inequality,
\begin{gather}\label{T3.16-5}
\sum_{i=1}^{n}\ang{x^*_i,v_i- \bar u}\le\sum_{i=1}^{n}\|x^*_i\|^*\cdot\|v_i-\bar u\|\le
\left(\sum_{i=1}^{n}\|v_i-\bar u\|^p\right)^{\frac{1}{p}}.
\end{gather}	
By \eqref{T3.16-4} and \eqref{T3.16-5},
we have $\ang{x^*_i,v_i-\bar u}=\|x^*_i\|^*\cdot\|v_i-\bar u\|$ and
$\|v_i-\bar u\|^p=\lambda\|x^*_i\|^{*q}$ $(i=1,\ldots,n)$ for some $\lambda\ge 0$.
Then $\sum_{i=1}^{n}\|v_i-u\|^p=\lambda\cdot\sum_{i=1}^{n}\|x^*_i\|^{*q}=\lambda$, and consequently,
$
\|x^*_i\|^{*q}=\frac{\|v_i-\bar u\|^p}{\sum_{i=1}^{n}\|v_i- u\|^p}$ $(i=1,\ldots,n)$.
Thus, condition \eqref{T3.16-2} holds true.
Conversely, suppose that there exist $x^*_1,\ldots,x^*_n\in X^*$ such that
conditions \eqref{T3.16-1} and \eqref{T3.16-2} are satisfied.
Then condition \eqref{T3.16-4} holds true.
By Theorem~\ref{T3.5}, we have $\bar u\in \text{\rm Sol}(P,\vertiii{\cdot}_{p})$.
}\fi
The first part  is a direct consequence of Proposition~\ref{P3.5+}\;\eqref{P3.6-3} and Theorem~\ref{T3.5}.
We now prove \eqref{T3.16-3}.
Let $u\in \text{\rm Sol}(P,\vertiii{\cdot}_{p})$.
In view of \eqref{P2.5-1} with $\vertiii{\cdot}:=\vertiii{\cdot}_p$, we have
\begin{gather}\label{T3.16-6}
\sum_{i=1}^{n}\ang{x^*_i,u-v_i}=\left(\sum_{i=1}^{n}\|u-v_i\|^p\right)^{\frac{1}{p}}.
\end{gather}
By \eqref{T3.16-6} and using the same arguments as in the proof of Proposition~\ref{P3.5+}\;\eqref{P3.6-3},
conditions \eqref{T3.16-2} are satisfied  with $u$ in place of $\bar u$.
Thus, $u\in\bigcap_{i=1}^n\text{\rm \textbf{C}}(v_i,x^*_i)$.
Conversely, if $u\in \bigcap_{i=1}^n \text{\rm \textbf{C}}(v_i,x_i^*)$, then
\begin{gather*}
\sum_{i=1}^{n}\ang{x^*_i,u-v_i}
=\sum_{i=1}^{n}\|x^*_i\|^*\cdot\|u-v_i\|
=\dfrac{\sum_{i=1}^{n}\|u-v_i\|^{1+\frac{p}{q}} }{\left(\sum_{i=1}^{n}\|u-v_i\|^p \right)^{\frac{1}{q}}}=
\left(\sum_{i=1}^{n}\|u-v_i\|^p \right)^{\frac{1}{p}}.
\end{gather*}	
By Theorem~\ref{T3.5}, we have  $u\in \text{\rm Sol}(P,\vertiii{\cdot}_p)$.
\end{proof}

\if{
\begin{remark}\label{R2.16}
Observe that $ \text{\rm \textbf{C}}(v_i,x^*_i):=\{v_i\}$ if $x^*_i=0$.
In general, there is no closed-form expression for the elements of $ \text{\rm \textbf{C}}(v_i,x^*_i)$, and
computational methods are required to find them these elements approximately.
\end{remark}	
}\fi
\begin{remark}
To the best of our knowledge, the dual necessary and sufficient optimality conditions and the construction of the solution set in Theorem~\ref{T3.16} have not been previously studied in the literature.
\end{remark}

The following examples illustrate Theorem~\ref{T3.5}.
\begin{example}\label{E7.2}
Let $\R^2$ be equipped with  some norm $\|\cdot\|$, vectors $v,w$ and $\bar u$ be given by \eqref{E4.4.1}.
The objective function \eqref{6.1} is of the form
\begin{gather}\label{E7.2-0}
f(u)=\left(\|u-v\|^p+\|u-w\|^p\right)^{\frac{1}{p}}
\end{gather}
for all $u\in\R^2$.
By Example~\ref{E3.7}, we have $\bar u\in\text{\rm Sol}(P)$.
Define $v^*:=\left(2^{-\frac{1}{q}},0\right)$ and $w^*:=\left(-2^{-\frac{1}{q}},0\right)$.
Then $v^*+w^*=0$.
\begin{enumerate}
\item\label{E7.2-1}
Suppose that $\|\cdot\|:=\|\cdot\|_\infty$.
Then
\begin{gather*}
\ang{v^*,\bar u-v}=\|v^*\|_1\cdot\|\bar u-v\|_\infty
=2^{-\frac{1}{q}},\;\;
\ang{w^*,\bar u-w}=\|w^*\|_1\cdot\|\bar u-w\|_\infty=2^{-\frac{1}{q}},\\
\|v^*\|^q_1=\dfrac{\|\bar u-v\|^p_\infty}{
\|\bar u-v\|^p_\infty+\|\bar u-w\|^p_\infty}=\frac{1}{2},\;
\|w^*\|^q_1=\dfrac{\|\bar u-w\|^p_\infty}{
	\|\bar u-v\|^p_\infty+\|\bar u-w\|^p_\infty}=\frac{1}{2}.
\end{gather*}	
The sets $\text{\rm \textbf{A}}_{\|\cdot\|_\infty}(v,v^*)$ and 	$\text{\rm \textbf{A}}_{\|\cdot\|_\infty}(w,w^*)$
are given by \eqref{E4.4-7} and \eqref{E4.4-8}.
By \eqref{T6.1-1},
\begin{align*}
\text{\rm \textbf{D}}_{\|\cdot\|_\infty}(v,v^*)
&=\left\{u\in\R^2\mid \frac{1}{2} =\dfrac{\|u-v\|_\infty^p}{\|u-v\|^p_\infty+\|w-u\|^p_\infty} \right\}\\
&=\left\{u\in\R^2\mid \|u-v\|_\infty=\|u-w\|_\infty \right\}\\
&=\left\{(u_1,u_2)\mid |u_2|\ge |u_1|,\;|u_2|\ge|u_1-2| \right\}\cup \left\{(1,\lambda)\mid |\lambda|\le 1 \right\},\\
\text{\rm \textbf{D}}_{\|\cdot\|_\infty}(w,w^*)
&=\left\{u\in\R^2\mid \frac{1}{2} =\dfrac{\|u-w\|_\infty^p}{\|u-v\|^p_\infty+\|u-w\|^p_\infty} \right\}=\text{\rm \textbf{D}}_{\|\cdot\|_\infty}(v,v^*).
\end{align*}	
By \eqref{T6.1-2},
\begin{align*}
\text{\rm \textbf{C}}_{\|\cdot\|_\infty}(v,v^*) 
&=\left\{(1,\lambda)\mid |\lambda|\le 1\right\}
\cup\left\{(|\lambda|,\lambda)\mid |\lambda|\ge 1\right\},\\
\text{\rm \textbf{C}}_{\|\cdot\|_\infty}(w,w^*) 
&=\left\{(1,\lambda)\mid |\lambda|\le 1\right\}
\cup\left\{(\lambda,|2-\lambda|)\mid \lambda\le 1\right\}.
\end{align*}	
By \eqref{T3.16-3}, we have
$ \text{\rm Sol}(P)=\text{\rm \textbf{C}}_{\|\cdot\|_\infty}(v,v^*)\cap \text{\rm \textbf{C}}_{\|\cdot\|_\infty}(w,w^*)=\left\{(1,\lambda)\mid |\lambda|\le 1\right\}.$
\begin{figure}[H]
\centering
\includegraphics[width=0.9\linewidth]{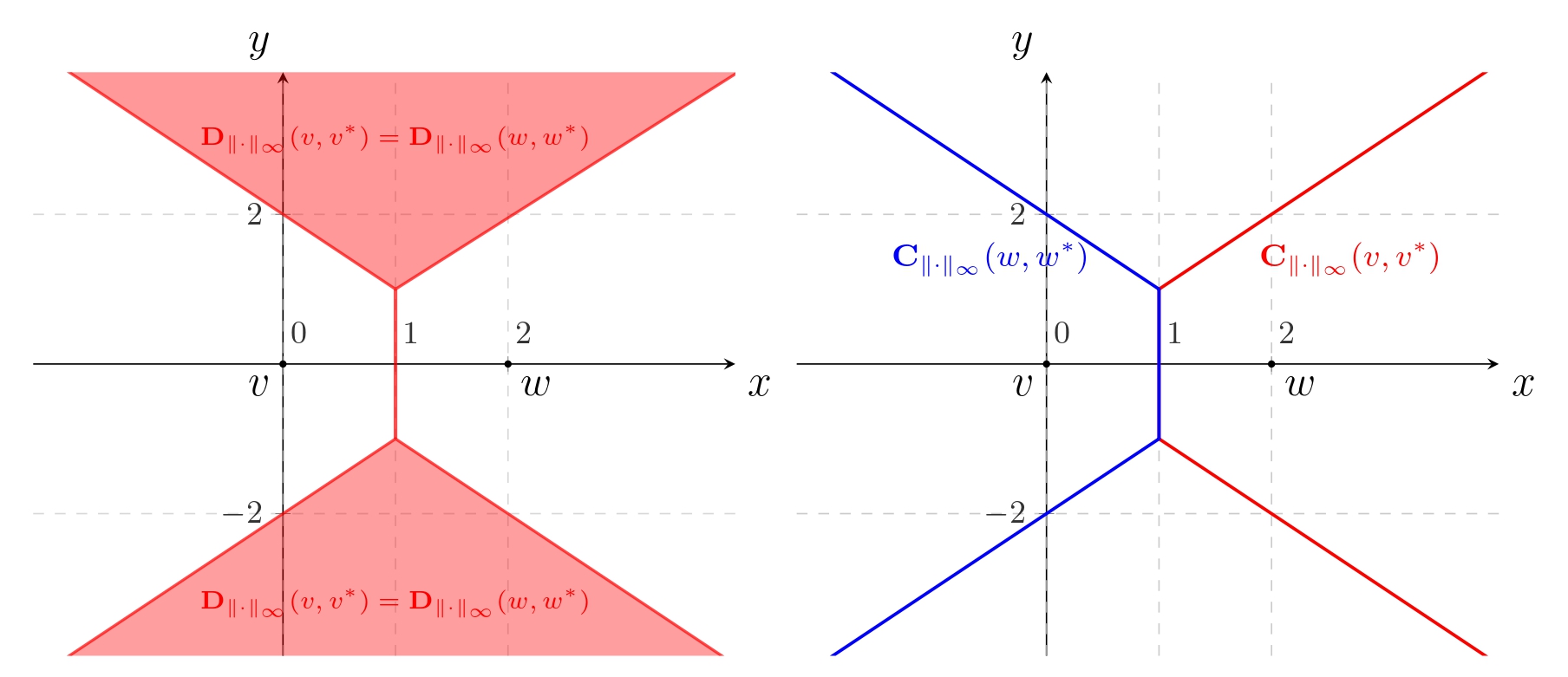}
\caption{Example~\ref{E7.2}\;\eqref{E7.2-1}}
\label{fig5}
\end{figure}
\item\label{E7.2-2}
Suppose that $\|\cdot\|:=\|\cdot\|_{p'}$ with $p'\in[1,\infty)$.
The dual norm  is $\|\cdot\|_{q'}$ with $q'\in(1,\infty]$ satisfying $\frac{1}{p'}+\frac{1}{q'}=1$.
We have
\begin{gather*}
\|v^*\|^q_{q'}=\dfrac{\|\bar u-v\|^p_{p'}}{
\|\bar u-v\|^p_{p'}+\|\bar u-w\|^p_{p'}}=1/2,\;
\|w^*\|^q_{q'}=\dfrac{\|\bar u-w\|^p_{p'}}{
\|\bar u-v\|^p_{p'}+\|\bar u-w\|^p_{p'}}=1/2,\\
\ang{v^*,\bar u-v}=\|v^*\|_{q'}\cdot\|\bar u-v\|_{p'}=2^{-\frac{1}{q}},\;\ang{w^*,\bar u-w}=\|w^*\|_{q'}\cdot\|\bar u-w\|_{p'}=2^{-\frac{1}{q}}.
\end{gather*}	
By  Lemma~\ref{L1.1}\;\eqref{L1.1-3} and \eqref{L1.1-2},
\begin{align*}
\text{\rm \textbf{A}}_{\|\cdot\|_{p'}}(v,v^*)
=\left\{(\lambda,0)\mid\lambda\ge 0 \right\},\;\;
\text{\rm \textbf{A}}_{\|\cdot\|_{p'}}(w,w^*)
=\left\{(\lambda,0)\mid \lambda\le 2\right\}.
\end{align*}	
By \eqref{T6.1-1},
\begin{align*}
\text{\rm \textbf{D}}_{\|\cdot\|_{p'}}(v,v^*)
&=\left\{u\in\R^2\mid \frac{1}{2} =\dfrac{\|u-v\|_{p'}^p}{\|u-v\|^p_{p'}+\|u-w\|^p_{p'}} \right\}\\
&=\left\{u\in\R^2\mid \|u-v\|_{p'}=\|u-w\|_{p'} \right\}
=\left\{(1,\lambda)\mid\lambda\in\R\right\},\\
\text{\rm \textbf{D}}_{\|\cdot\|_{p'}}(w,w^*)
&=\left\{u\in\R^2\mid \frac{1}{2} =\dfrac{\|u-w\|_{p'}^p}{\|u-v\|^p_{p'}+\|u-w\|^p_{p'}} \right\}
=\text{\rm \textbf{D}}_{\|\cdot\|_{p'}}(v,v^*).
\end{align*}	
By \eqref{T6.1-2},
\begin{align*}
\text{\rm \textbf{C}}_{\|\cdot\|_{p'}}(v,v^*) 
=\text{\rm \textbf{A}}_{\|\cdot\|_{p'}}(v,v^*)
\cap \text{\rm \textbf{D}}_{\|\cdot\|_{p'}}(v,v^*)=\{\bar u\},\\
\text{\rm \textbf{C}}_{\|\cdot\|_{p'}}(v,v^*) 
=\text{\rm \textbf{A}}_{\|\cdot\|_{p'}}(v,v^*)
\cap \text{\rm \textbf{D}}_{\|\cdot\|_{p'}}(v,v^*)=\{\bar u\}.
\end{align*}	
By \eqref{T3.16-3}, we have
$\text{\rm Sol}(P)=\text{\rm \textbf{C}}_{\|\cdot\|_{p'}}(v,v^*)\cap \text{\rm \textbf{C}}_{\|\cdot\|_{p'}}(w,w^*)=\{\bar u\}.$
\begin{figure}[H]
\centering
\includegraphics[width=1\linewidth]{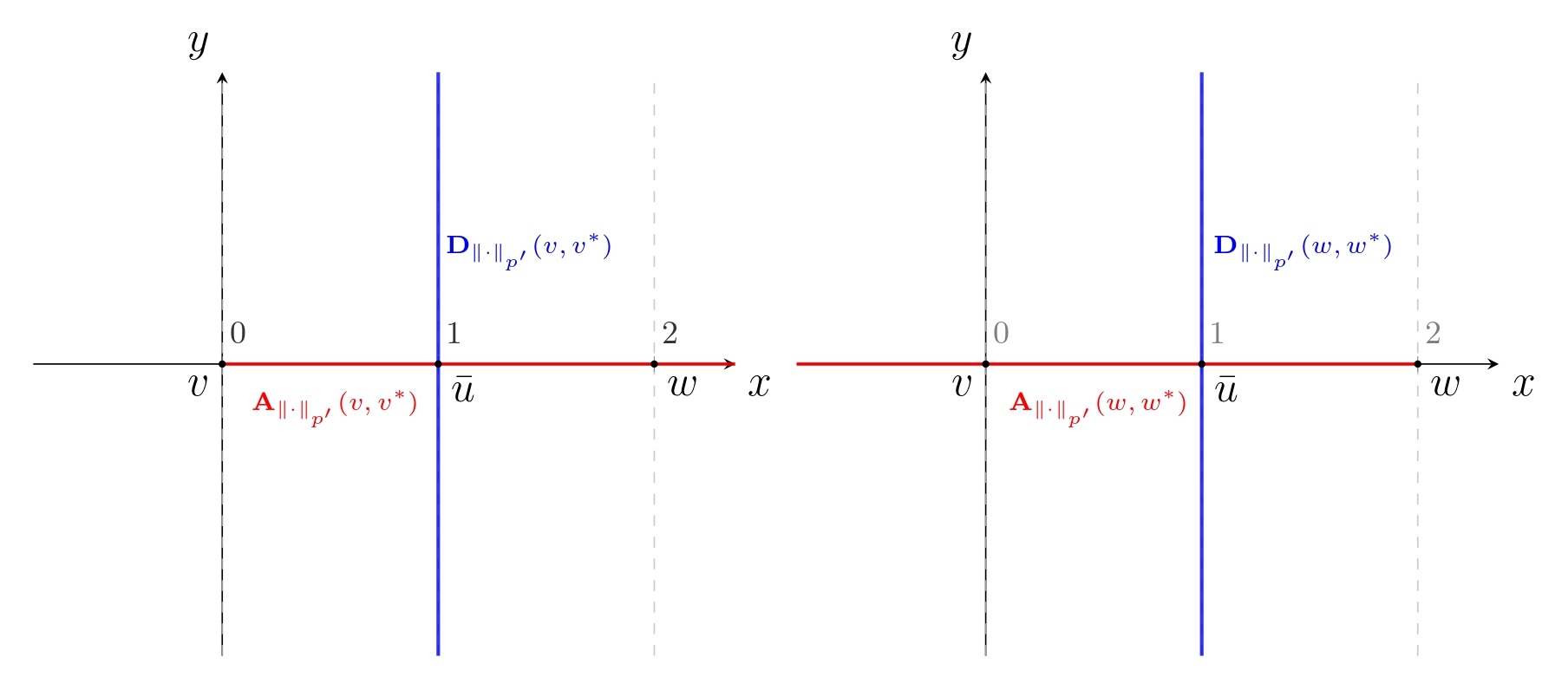}
\caption{Example~\ref{E7.2}\;\eqref{E7.2-2}}
\label{fig6}
\end{figure}
\end{enumerate}
\end{example}

\begin{example}
Let $X$ be the Hilbert space in Example~\ref{E4.3}, and vectors
$v,w,\bar u$ be given by \eqref{E5.4-1}.
The objective function is of the form \eqref{E7.2-0} for all $u\in X$.
By Example~\ref{E3.7}, we have $\bar u\in\text{\rm Sol}(P)$.
By Theorem~\ref{T3.16}, there exist $v^*,w^*\in X^*$ such that
\begin{gather*}
\|v^*\|^{*q}=\dfrac{\|\bar u-v\|^p}{
\|\bar u-v\|^p+\|\bar u-w\|^p},\;
\|w^*\|^{*q}=\dfrac{\|\bar u-w\|^p}{
\|\bar u-v\|^p+\|\bar u-w\|^p},\\
\|v^*\|^{*q}+\|w^*\|^{*q}=1,\;
\ang{v^*,\bar u-v}=\|v^*\|^*\cdot\|\bar u-v\|,\;\ang{w^*,\bar u-w}=\|w^*\|^*\cdot\|\bar u-w\|.
\end{gather*}	
By the Riesz representation theorem, there exist unique vectors
$\bar v:=-2^{-\frac{1}{q}}\sqrt 3 v$ and $\bar w:=2^{-\frac{1}{q}}\sqrt 3 v$  such that the above conditions are satisfied with $\bar v$ and $\bar w$ in place of $v^*$ and $w^*$, respectively.	
Indeed, 
\begin{gather*}
\|\bar v\|^{q}=
\frac{\|\bar u-v\|^p}{
\|\bar u-v\|^p+\|\bar u-w\|^p}=1/2,\;\|\bar w\|^q=
\frac{\|\bar u-w\|^p}{\|\bar u-v\|^p+\|\bar u-w\|^p}=1/2,\\
\ang{\bar v,\bar u-v}
=\|\bar v\|\cdot\|\bar u-v\|=\frac{1}{2^{\frac{1}{q}}\sqrt 3},\;\ang{\bar w,\bar u-w}
=\|\bar w\|\cdot\|\bar u-w\|=\frac{1}{2^{\frac{1}{q}}\sqrt 3}.
\end{gather*}	
By  Lemma~\ref{L1.1}\;\eqref{L1.1-1},
\begin{align*}
\text{\rm \textbf{A}}_{\|\cdot\|}(v,v^*)
=\left\{\lambda v\mid \lambda\le 1\right\},\;\;
\text{\rm \textbf{A}}_{\|\cdot\|}(w,w^*)
=\left\{\lambda v\mid \lambda\ge -1\right\}.
\end{align*}	
By \eqref{T6.1-1},
\begin{align*}
\text{\rm \textbf{D}}_{\|\cdot\|}(v,v^*)
&=\left\{u\in X\mid \frac{1}{2} =\dfrac{\|u-v\|^p}{\|u-v\|^p+\|u-w\|^p} \right\}\\
&=\left\{u\in X\mid \|u-v\|=\|u-w\|\right\}
=\left\{u\in X\mid \ang{u,v}=0 \right\},\\
\text{\rm \textbf{D}}_{\|\cdot\|}(w,w^*)
&=\left\{u\in X\mid \frac{1}{2} =\dfrac{\|u-w\|^p}{\|u-v\|^p+\|u-w\|^p} \right\}
=\text{\rm \textbf{D}}_{\|\cdot\|}(v,v^*).
\end{align*}	
By \eqref{T6.1-2},
\begin{align*}
\text{\rm \textbf{C}}_{\|\cdot\|}(v,v^*) 
&=\text{\rm \textbf{A}}_{\|\cdot\|}(v,v^*)
\cap \text{\rm \textbf{D}}_{\|\cdot\|}(v,v^*)=\{\bar u\},\\
\text{\rm \textbf{C}}_{\|\cdot\|}(w,w^*) 
&=\text{\rm \textbf{A}}_{\|\cdot\|}(w,w^*)
\cap \text{\rm \textbf{D}}_{\|\cdot\|}(w,w^*)=\{\bar u\}.
\end{align*}	
By \eqref{T3.16-3}, we have
$\text{\rm Sol}(P)=\text{\rm \textbf{C}}_{\|\cdot\|}(v,v^*)\cap \text{\rm \textbf{C}}_{\|\cdot\|}(w,w^*)=\{\bar u\}.$
\end{example}	

\section*{Acknowledgments}
{
The author wishes to thank Professor Alexander Kruger for  his comments and the anonymous referee for suggestions which helped us improve the manuscript.
}
\section*{Disclosure statement}
The author reports there are no competing interests to declare.

\section*{Data availability statement}
Data sharing is not applicable to this article as no new data were created or analyzed in this study.

\section*{Funding}
{This research was supported by the Postdoctoral Scholarship Programme of the Vingroup Innovation Foundation (VinIF) under grant code VINIF.2022.STS.40.
}

\section*{ORCID}
Nguyen Duy Cuong http://orcid.org/0000-0003-2579-3601


\end{document}